\numberwithin{equation}{section}
\newtheorem{thm}{Theorem}[section]
\newtheorem{cor}[thm]{Corollary}
\newtheorem{lem}[thm]{Lemma}
\newtheorem{prop}[thm]{Proposition}
\newtheorem{defn}[thm]{Definition}
\newtheorem{rem}[thm]{Remark}
\newtheorem{ques}[thm]{Question}
\newcommand{\Ann}{\mbox{Ann}\,}
\newcommand{\Ext}{\mbox{Ext}\,}
\newcommand{\Spec}{\mbox{Spec}\,}
\newcommand{\Ass}{\mbox{Ass}\,}
\newcommand{\Assh}{\mbox{Assh}\,}
\newcommand{\Att}{\mbox{Att}\,}
\newcommand{\Supp}{\mbox{Supp}\,}
\newcommand{\hSupp}{\mbox{hSupp}\,}
\renewcommand{\dim}{\mbox{dim}\,}
\newcommand{\cd}{\mbox{cd}\,}
\newcommand{\Min}{\mbox{Min}\,}
\newcommand{\h}{\mbox{ht}\,}
\newcommand{\im}{\mbox{Im}\,}
\newcommand{\E}{\mbox{E}}
\renewcommand{\H}{\mbox{H}}
\newcommand{\V}{\mbox{V}}
\newcommand{\ara}{\mbox{ara}}
\newcommand{\fa}{\mathfrak{a}}
\newcommand{\fb}{\mathfrak{b}}
\newcommand{\fm}{\mathfrak{m}}
\newcommand{\fp}{\mathfrak{p}}
\newcommand{\fq}{\mathfrak{q}}
\newcommand{\fc}{\mathfrak{c}}
\begin{document}

\title[Artinian and non-artinian local cohomology modules]
 {Artinian and non-artinian local cohomology modules}

\bibliographystyle{amsplain}

     \author[Mohammad T. Dibaei]{Mohammad T. Dibaei$^{1}$}
     \author[A. Vahidi]{A. Vahidi$^{2}$}

\address{$^{1, 2}$ Faculty of Mathematical Sciences and Computer,
Tarbiat Moallem University, Tehran, Iran.}

\address{$^1$ School of Mathematics, Institute for Research in Fundamental Sciences (IPM),
P. O. Box: 1935-5746, Tehran, Iran.}
\email{dibaeimt@ipm.ir}

\address{$^2$ Payame Noor University (PNU), Iran.}
\email{vahidi.ar@gmail.com}

\thanks{M. T. Dibaei was supported by a grant from IPM No. ....}

\keywords{Local cohomology modules, Cohomological dimensions, Bass numbers.}

\subjclass[2000]{13D45, 13E10.}


\begin{abstract}
Let $M$ be a finite module over a commutative noetherian ring $R$.
For ideals $\fa$ and $\fb$ of $R$, the relations between
cohomological dimensions of $M$ with respect to $\fa, \fb$,
$\fa\cap\fb$ and $\fa+ \fb$ are studied. When $R$ is local, it is
shown that $M$ is generalized Cohen-Macaulay if there exists an
ideal $\fa$ such that all local cohomology modules of $M$ with
respect to $\fa$ have finite lengths. Also, when $r$ is an integer
such that $0\leq r< \dim_R(M)$, any maximal element $\fq$ of the
non-empty set of ideals $\{\fa$ : $\H_\fa^i(M)$ is not artinian for
some $i$, $i\geq r$$\}$ is a prime ideal and that all Bass numbers
of $\H_\fq^i(M)$ are finite for all $i\geq r$.
\end{abstract}

\maketitle

\section{Introduction}

Throughout $R$ is a commutative noetherian ring, $\fa$ is a proper
ideal of $R$, $X$ and $M$ are non--zero $R$--modules and $M$ is a
finite (i.e. finitely generated). Recall that the $i$th local
cohomology functor $\H_\fa^i$ is the $i$th right derived functor of
the $\fa$--torsion functor $\Gamma_\fa$. Also, the cohomological
dimension of $X$ with respect to $\fa$, denoted by $\cd(\fa, X)$, is
defined as $$\cd(\fa, X):= \sup \{i: \H^{i}_{\fa}(X)\neq 0\}.$$

In section 2, we discuss the arithmetic of cohomological dimensions.
We show that the inequalities  $\cd(\fa+ \fb,M)\leq \cd(\fa, M)+
\cd(\fb, M)$ and $\cd(\fa+ \fb, X)\leq \ara(\fa)+ \cd(\fb, X)$ hold
true and  we find some equivalent conditions for which each
inequality becomes equality.

In section 3, we study artinian local cohomology modules. We first
observe that over a local ring $(R, \fm)$ if there is an integer $n$
such that $\dim_{R}(\H_\fa^i(X))\leq 0$ for all $i\leq n$
(respectively, for all $i\geq n$), then $\H^{i}_{\fa}(X)\cong
\H^{i}_{\fm}(X)$ for all $i\leq n$ (respectively, for all $i\geq n+
\ara(\fm/\fa$)) (Theorem \ref {3--2}). In this situation, if $X$ is
finite then $\H_\fa^i(X)$ is artinian for all $i\leq n$
(respectively, for all $i\geq n+ \cd(\fm/\fa, X)$), which is related
to the third of Huneke's four problem in local cohomology \cite{Hu}.
Here, for ideals $\fa\subseteq\fb$, $\cd(\fb/\fa, X)$ is introduced
to be the infimum of the set $\{\cd(\fc, X): \fc $ is an ideal of
$R$ and $\sqrt{\fb}= \sqrt{\fc+ \fa}\}.$ It is deduced that $M$ is
generalized Cohen-Macaulay if there exists an ideal $\fa$ such that
all local cohomology modules of $M$ with respect to $\fa$ have
finite lengths (Corollary \ref{3--4}).

Section 4 is devoted to study non-artinian-ness of local cohomology
modules. Note that $\cd(\fa+ Rx, X)\leq \cd(\fa, X)+ 1$ for all
$x\in R$ \cite[Lemma 2.5]{DNT}, we show that if there exist
$x_{1},...,x_{n}\in R$ such that $\cd(\fa+ (x_{1},...,x_{n}), X)=
\cd(\fa, X)+ n$, then $\dim_R(\H_\fa^{\tiny{\cd(\fa, X)}}(X))\geq n$
and so $\H_\fa^{\tiny{\cd(\fa, X)}}(X)$ is not artinian (Corollary
\ref {4--1}). For each integer $r$, $0\leq r< d$ ($ d:= \dim_R(M)$),
we introduce $\mathcal{L}^r(M)$, the set of all ideals $\fa$ for
which $\H_\fa^i(M)$ is not artinian for some $i\geq r$. It is
evident that if $d> 0$ then $\mathcal{L}^r(M)$ is not empty. We show
that any maximal element $\fq$ of $\mathcal{L}^r(M)$ is a prime
ideal and that all Bass numbers of $\H_\fq^i(M)$ are finite for all
$i\geq r$. We conclude that this statement generalizes
\cite[Corollary 2]{DM} (see Theorem \ref {4--7} and its comment).


\section{Arithmetic of cohomological dimensions}

Assume that $\fa, \fb$ are ideals of $R$ and that $X$ is an
$R$--module. In this section, we study relationships between the
numbers $\cd(\fa, X), \cd(\fb, X), \cd(\fa+\fb, X), \cd(\fa\cap\fb, X)(= \cd(\fa\fb,
X))$, $\ara(\fa)$, etc, which are interesting in
themselves and we use them to determine artinian-ness and
non-artinian-ness of certain local cohomology modules in the next
sections.



\begin{lem} \label {2--1} Let $X$ be an $R$--module and let $t$ be a non-negative
integer such that for all $r$, $0\leq r\leq t$, $\emph\H^{t-r}_{\fa}(\emph\H^{r}_{\fb}(X))= 0$.
Then $\emph\H^{t}_{\fa + \fb}(X)$ is also zero.

\begin{proof} By \cite [Theorem 11.38]{Rot}, there is a Grothendieck spectral sequence
$$E^{p, q}_{2}:=\H^{p}_{\fa}(\H^{q}_{\fb}(X))
_{\stackrel{\Longrightarrow}{p}} \H^{p +q}_{\fa + \fb}(X).$$ For all
$r$, $0\leq r\leq t$, we have $E_{\infty}^{t-r, r}=E_{t+2}^{t-r, r}$
since $E_{i}^{t-r-i, r+i-1}= 0= E_{i}^{t-r+i, r+1-i}$ for all $i\geq
t+2$. Note that $E_{t+2}^{t-r, r}$ is a subquotient of $E_{2}^{t-r,
r}$ which is zero by assumption. Thus $E_{t+2}^{t-r, r}$ is zero,
that is $E_{\infty}^{t-r, r}= 0$. There exists a finite filtration
$$0= \phi^{t+1}H^{t}\subseteq \phi^{t}H^{t}\subseteq \cdots\subseteq \phi^{1}H^{t}\subseteq \phi^{0}H^{t}=
\H^{t}_{\fa + \fb}(X)$$ such that $E_{\infty}^{t-r,
r}=\phi^{t-r}H^{t}/\phi^{t-r+1}H^{t}$ for all $r$, $0\leq r\leq t.$
Therefore we have
$$0= \phi^{t+1}H^{t}= \phi^{t}H^{t}= \cdots= \phi^{1}H^{t}= \phi^{0}H^{t}= \H^{t}_{\fa + \fb}(X)$$
as desired.
\end{proof}
\end{lem}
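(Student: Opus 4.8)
The plan is to read the conclusion off the Grothendieck spectral sequence of the composite functor $\Gamma_\fa \circ \Gamma_\fb$. Two preliminary observations make that machinery available. First, $\Gamma_{\fa+\fb} = \Gamma_\fa \circ \Gamma_\fb$ as functors on $R$--modules: since $(\fa+\fb)^{2n} \subseteq \fa^n + \fb^n \subseteq (\fa+\fb)^n$ for every $n$, an element of $X$ is annihilated by some power of $\fa+\fb$ exactly when it is annihilated by some power of $\fa$ and by some power of $\fb$, and this says precisely that $\Gamma_{\fa+\fb}(X) = \Gamma_\fa(\Gamma_\fb(X))$, naturally in $X$. Second, $\Gamma_\fb$ carries injective $R$--modules to $\Gamma_\fa$--acyclic modules: an injective module is a direct sum of copies of modules $\E(R/\fp)$, and $\Gamma_\fb$ deletes the summands with $\fb\not\subseteq\fp$ and leaves the rest unchanged, so $\Gamma_\fb$ of an injective is again injective, hence $\Gamma_\fa$--acyclic. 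By \cite[Theorem 11.38]{Rot} there is therefore a first-quadrant cohomological spectral sequence
$$E_2^{p,q} = \H^p_\fa(\H^q_\fb(X)) \Longrightarrow \H^{p+q}_{\fa+\fb}(X).$$

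Next, fix the total degree $t$ and look at the antidiagonal $p+q=t$ on the $2$-page. Because the sequence sits in the first quadrant, the only possibly nonzero terms there are $E_2^{t-r,r}$ with $0\leq r\leq t$, and by hypothesis each of these equals $\H^{t-r}_\fa(\H^r_\fb(X)) = 0$. Since $E_\infty^{p,q}$ is always a subquotient of $E_2^{p,q}$, it follows that $E_\infty^{t-r,r} = 0$ for every $r$ with $0\leq r\leq t$. (If one prefers not to invoke that general fact: for pages $j\geq t+2$ the differentials into and out of the spot $(t-r,r)$ originate at, or land in, a position with a negative coordinate, so the entry there already stabilizes at $E_{t+2}^{t-r,r}$, which is a subquotient of the zero module $E_2^{t-r,r}$.)

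Finally, the abutment $\H^t_{\fa+\fb}(X)$ carries the usual finite filtration $0 = \phi^{t+1} \subseteq \phi^t \subseteq \cdots \subseteq \phi^0 = \H^t_{\fa+\fb}(X)$ whose successive quotients $\phi^{t-r}/\phi^{t-r+1}$ are exactly the terms $E_\infty^{t-r,r}$, $0\leq r\leq t$ (the remaining graded pieces vanish, having a negative coordinate). All of these quotients are zero by the previous paragraph, so the filtration is constant and $\H^t_{\fa+\fb}(X) = 0$, as required.

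The spectral-sequence bookkeeping is entirely routine: once the sequence is in hand, the fact that vanishing of a single antidiagonal of $E_2$ forces the vanishing of the corresponding piece of the abutment is a standard ``corner'' argument. The one step that deserves care is the justification that the Grothendieck theorem applies at all, namely the functor identity $\Gamma_{\fa+\fb} = \Gamma_\fa\Gamma_\fb$ together with the preservation of acyclicity under $\Gamma_\fb$; both are short, but they are the load-bearing input, and I would isolate them as the two remarks above before quoting \cite[Theorem 11.38]{Rot}.
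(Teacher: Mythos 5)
Your proof is correct and follows essentially the same route as the paper: the Grothendieck spectral sequence $E_2^{p,q}=\H^p_\fa(\H^q_\fb(X))\Rightarrow \H^{p+q}_{\fa+\fb}(X)$, vanishing of the antidiagonal $p+q=t$ on the $E_2$-page, hence of $E_\infty^{t-r,r}$, and collapse of the finite filtration of the abutment. The only difference is that you also verify the hypotheses behind the citation of \cite[Theorem 11.38]{Rot} (the identity $\Gamma_{\fa+\fb}=\Gamma_\fa\Gamma_\fb$ and that $\Gamma_\fb$ sends injectives to $\Gamma_\fa$-acyclics), which the paper leaves implicit; this is a harmless and welcome addition.
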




The following corollary is the first application of the above lemma.

\begin{cor} \label {2--2} For a finite
$R$--module $M$, the following statements hold true.
               \begin{enumerate}
                   \item[\emph{(i)}]{$\emph{\cd}(\fa+\fb,M)\leq \emph{\cd}(\fa,M)+ \emph{\cd}(\fb,M)$.}
                   \item[\emph{(ii)}]{$\emph{\cd}(\fa\cap \fb,M)\leq \emph{\cd}(\fa,M)+ \emph{\cd}(\fb,M)$.}
                   \item[\emph{(iii)}]{$\emph{\cd}(\fa,M)\leq \displaystyle\sum_{\fp \in \emph{\Min}(\fa)}
                   \emph{\cd}(\fp,M)$.}
               \end{enumerate}

\begin{proof}(i) Assume that
$t$ is a non-negative integer such that $t> \cd(\fa,M)+ \cd(\fb,M)$.
We show that $\H^{t-r}_{\fa}(\H^{r}_{\fb}(M))= 0$ for all $r$,
$0\leq r\leq t$. If $r> \cd(\fb,M)$, then
$\H^{t-r}_{\fa}(\H^{r}_{\fb}(M))= 0$ by the definition of
cohomological dimension. Otherwise, $t-r> \cd(\fa,M)$. Since
$\Supp_{R}(\H^{r}_{\fb}(M))\subseteq \Supp_{R}(M)$,
$\cd(\fa,M)\geqslant \cd(\fa,\H^{r}_{\fb}(M))$ (see \cite [Theorem
1.4]{DY}). Therefore $\H^{t-r}_{\fa}(\H^{r}_{\fb}(M))= 0$. Now
applying Lemma \ref {2--1}, we see that $\H^{t}_{\fa + \fb}(M)=0$
which yields the assertion.

(ii) Consider the Mayer-Vietoris exact sequence
$$0\longrightarrow \Gamma_{\fa+\fb}(M)\longrightarrow \Gamma_{\fa}(M)\oplus \Gamma_{\fb}(M)
\longrightarrow \Gamma_{\fa\cap \fb}(M)\longrightarrow \cdots$$
$$\cdots\longrightarrow \H^{t}_{\fa+\fb}(M)\longrightarrow  \H^{t}_{\fa}(M)\oplus
\H^{t}_{\fb}(M)\longrightarrow \H^{t}_{\fa\cap \fb}(M)\longrightarrow
\H^{t+1}_{\fa+\fb}(M)\longrightarrow \cdots$$ and use part (i).

(iii) As $\sqrt{\fa}=\displaystyle\bigcap_{\fp\in\tiny\Min(\fa)}\fp$,
the claim follows from part (ii).
\end{proof}
\end{cor}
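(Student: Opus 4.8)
The plan is to prove the three parts in order, deducing (i) from Lemma \ref{2--1}, (ii) from (i) via a long exact sequence, and (iii) from (ii) by an easy induction.

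For (i), I would fix a non-negative integer $t$ with $t> \cd(\fa,M)+ \cd(\fb,M)$ and aim to show $\H^t_{\fa+\fb}(M)= 0$ by checking the hypothesis of Lemma \ref{2--1}: that $\H^{t-r}_{\fa}(\H^{r}_{\fb}(M))= 0$ for every $r$ with $0\le r\le t$. I would split into two cases. If $r> \cd(\fb,M)$, then $\H^{r}_{\fb}(M)= 0$ already and the iterated module vanishes. If $r\le \cd(\fb,M)$, then $t-r> \cd(\fa,M)$; the point is that $\H^{r}_{\fb}(M)$, although generally not finitely generated, satisfies $\Supp_R(\H^{r}_{\fb}(M))\subseteq \Supp_R(M)$, and the cohomological dimension with respect to $\fa$ is monotone in the support (e.g.\ \cite[Theorem 1.4]{DY}), so $\cd(\fa,\H^{r}_{\fb}(M))\le \cd(\fa,M)< t-r$ and hence $\H^{t-r}_{\fa}(\H^{r}_{\fb}(M))= 0$. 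Feeding this back into Lemma \ref{2--1} gives $\H^t_{\fa+\fb}(M)= 0$, which is exactly the claimed inequality.

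For (ii), I would invoke the Mayer--Vietoris long exact sequence connecting $\H^{\bullet}_{\fa+\fb}(M)$, $\H^{\bullet}_{\fa}(M)\oplus \H^{\bullet}_{\fb}(M)$ and $\H^{\bullet}_{\fa\cap\fb}(M)$. For $t> \cd(\fa,M)+ \cd(\fb,M)$ one has both $t> \cd(\fa,M)$ and $t> \cd(\fb,M)$, so $\H^{t}_{\fa}(M)\oplus \H^{t}_{\fb}(M)= 0$; and since $t+1> \cd(\fa,M)+ \cd(\fb,M)$ as well, part (i) gives $\H^{t+1}_{\fa+\fb}(M)= 0$. Exactness of the portion $\H^{t}_{\fa}(M)\oplus \H^{t}_{\fb}(M)\to \H^{t}_{\fa\cap\fb}(M)\to \H^{t+1}_{\fa+\fb}(M)$ then forces $\H^{t}_{\fa\cap\fb}(M)= 0$. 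For (iii), I would use that $\sqrt{\fa}= \bigcap_{\fp\in \Min(\fa)}\fp$ and that local cohomology depends only on the radical of the ideal, then induct on $|\Min(\fa)|$, peeling off one minimal prime at each step by (ii).

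The only genuinely non-formal ingredient — and the step I expect to be the main obstacle — is the case $r\le \cd(\fb,M)$ in part (i): one must control the cohomological dimension of the (typically non-finite) module $\H^{r}_{\fb}(M)$ with respect to $\fa$. This reduces cleanly to the fact that $\cd(\fa,-)$ does not increase when one passes to a module with smaller support, so once that input is in place the rest is bookkeeping with the spectral sequence of Lemma \ref{2--1} and the Mayer--Vietoris sequence.
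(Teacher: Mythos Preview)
Your proposal is correct and follows essentially the same route as the paper: for (i) you verify the hypothesis of Lemma~\ref{2--1} via the same two-case split (using $\Supp_R(\H^{r}_{\fb}(M))\subseteq \Supp_R(M)$ together with \cite[Theorem 1.4]{DY} for the non-trivial case), for (ii) you use the Mayer--Vietoris sequence exactly as the paper does, and for (iii) you reduce to (ii) via $\sqrt{\fa}=\bigcap_{\fp\in\Min(\fa)}\fp$. The only differences are cosmetic --- you spell out the Mayer--Vietoris step and the induction in (iii) more explicitly than the paper does.
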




\begin{rem} \label {2--3} \emph{In the above corollary, one may state more precise
statements is certain cases as follows:
                   \begin{itemize}
                       \item[(ii$'$)] If $\cd(\fa, M)> 0$ and $\cd(\fb, M)> 0$, then $\cd(\fa\cap\fb, M)\leq \cd(\fa, M)+ \cd(\fb, M)-1$.
                        \item[(iii$'$)] If $R$ is local and $M$ is not $\fa$--torsion, then $$\cd(\fa, M)\leq \displaystyle\sum_{\fp\in\tiny\Min(\fa)} \cd(\fp,M)- |\Min(\fa)|+1.$$
                   \end{itemize}
Note that the proof of (ii$'$) is similar to that of  Corollary
\ref{2--2}(ii). For (iii$'$), we have $\cd(\fp, M)> 0$ for all
$\fp\in\Min(\fa)$ since $M$ is not $\fa$--torsion. The result
follows by induction on $|\Min(\fa)|$.}
\end{rem}



For a general module $X$, not necessarily finite, we have the
following result.

\begin{cor} \label {2--4} Let $X$ be an arbitrary $R$--module. Then the following statements hold true.
               \begin{enumerate}
                    \item[\emph{(i)}]{$\emph{\cd}(\fa+\fb,X)\leq \emph{\ara}(\fa)+ \emph{\cd}(\fb,X)$.}
                    \item[\emph{(ii)}]{$\emph{\cd}(\fa\cap\fb,X)\leq \emph{\ara}(\fa)+ \emph{\cd}(\fb,X)$.}
                    \item[\emph{(iii)}]{$\emph{\cd}(\fb,X)\leq \emph{\cd}(\fa,X)+ \emph{\ara}(\fb/\fa)$
                                       whenever $\fa\subseteq \fb$.}
               \end{enumerate}

\begin{proof} The proofs of (i) and (ii) are similar to those of Corollary \ref{2--2} (i) and (ii), respectively.
For (iii), let $e= \cd(\fa,X)$ and  $f= \ara(\fb/\fa)$. There exist
$x_{1},...,x_{f}\in R$ such that $\sqrt{\fb}= \sqrt{(x_{1},...,x_{f})+ \fa}$. Now, use part (i).
\end{proof}
\end{cor}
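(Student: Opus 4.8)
The plan is to obtain all three inequalities by imitating the proof of Corollary \ref{2--2}, with the single modification that the estimate $\cd(\fa,M)\geq\cd(\fa,\H^r_\fb(M))$ used there (which rests on \cite[Theorem 1.4]{DY}, and hence on finiteness of $M$) is replaced by the cruder but unconditional bound $\cd(\fc,Y)\leq\ara(\fc)$, valid for every ideal $\fc$ and every $R$--module $Y$. This bound is immediate: choosing $x_1,\dots,x_n\in R$ with $n=\ara(\fc)$ and $\sqrt{\fc}=\sqrt{(x_1,\dots,x_n)}$, the module $\H^i_\fc(Y)\cong\H^i_{(x_1,\dots,x_n)}(Y)$ is the $i$th cohomology of the \v{C}ech complex on $x_1,\dots,x_n$, which vanishes in degrees $>n$. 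Since this input needs no hypothesis on $Y$, the whole argument runs for an arbitrary module $X$.

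For part (i) I would fix a non-negative integer $t$ with $t>\ara(\fa)+\cd(\fb,X)$ and prove $\H^t_{\fa+\fb}(X)=0$; by Lemma \ref{2--1} it suffices to check that $\H^{t-r}_\fa(\H^r_\fb(X))=0$ for all $r$ with $0\leq r\leq t$. If $r>\cd(\fb,X)$ then $\H^r_\fb(X)=0$ and there is nothing to do; otherwise $t-r>\ara(\fa)$, and then $\H^{t-r}_\fa(\H^r_\fb(X))=0$ by the bound above applied to $Y=\H^r_\fb(X)$. Hence $\cd(\fa+\fb,X)\leq\ara(\fa)+\cd(\fb,X)$. (Alternatively, writing $n=\ara(\fa)$ and $\sqrt{\fa}=\sqrt{(x_1,\dots,x_n)}$, one may simply iterate \cite[Lemma 2.5]{DNT} $n$ times to get $\cd(\fa+\fb,X)=\cd((x_1,\dots,x_n)+\fb,X)\leq\cd(\fb,X)+n$.)

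For part (ii) I would feed part (i) into the Mayer--Vietoris sequence
$$\cdots\lo\H^t_{\fa+\fb}(X)\lo\H^t_\fa(X)\oplus\H^t_\fb(X)\lo\H^t_{\fa\cap\fb}(X)\lo\H^{t+1}_{\fa+\fb}(X)\lo\cdots.$$
For $t>\ara(\fa)+\cd(\fb,X)$ the two outer terms vanish by part (i), while $\H^t_\fa(X)\oplus\H^t_\fb(X)=0$ since $t>\ara(\fa)\geq\cd(\fa,X)$ and $t>\cd(\fb,X)$; exactness then forces $\H^t_{\fa\cap\fb}(X)=0$, which is the assertion. For part (iii), put $f=\ara(\fb/\fa)$ and pick $x_1,\dots,x_f\in R$ with $\sqrt{\fb}=\sqrt{(x_1,\dots,x_f)+\fa}$, so that $\H^i_\fb(X)\cong\H^i_{(x_1,\dots,x_f)+\fa}(X)$ for all $i$; applying part (i) with $(x_1,\dots,x_f)$ in the role of $\fa$ and $\fa$ in the role of $\fb$ yields $\cd(\fb,X)=\cd((x_1,\dots,x_f)+\fa,X)\leq\ara((x_1,\dots,x_f))+\cd(\fa,X)\leq f+\cd(\fa,X)$.

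There is no genuine obstacle here: once the estimate $\cd(\fc,-)\leq\ara(\fc)$ is in hand, parts (i) and (ii) are word for word the proofs of Corollary \ref{2--2}(i),(ii), and part (iii) is the one-line reduction above. The only point that takes a moment's thought --- and the reason the statement is phrased for a general $X$ rather than a finite $M$ --- is to notice that finiteness entered Corollary \ref{2--2} \emph{only} through the comparison $\cd(\fa,M)\geq\cd(\fa,\H^r_\fb(M))$, and that weakening it to $\cd(\fa,-)\leq\ara(\fa)$ costs exactly the replacement of $\cd(\fa,M)$ by $\ara(\fa)$ in the conclusion. Degenerate cases ($\ara(\fa)=0$, $X$ being $\fb$--torsion, or $\fb\subseteq\sqrt{\fa}$) should be glanced at but cause no trouble.
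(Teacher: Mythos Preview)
Your proposal is correct and follows essentially the same approach as the paper: the paper simply says that (i) and (ii) are proved as in Corollary~\ref{2--2}(i),(ii) and that (iii) follows from (i) after writing $\sqrt{\fb}=\sqrt{(x_1,\dots,x_f)+\fa}$, and you have supplied exactly the details this entails, correctly isolating that the only change from Corollary~\ref{2--2} is replacing the bound $\cd(\fa,M)\geq\cd(\fa,\H^r_\fb(M))$ by the unconditional $\cd(\fa,-)\leq\ara(\fa)$.
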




We need some sufficient conditions for validity of the isomorphism
$\H^{s}_{\fa}(\H^{t}_{\fb}(X))\cong \H^{s+t}_{\fa + \fb}(X)$, for
given non-negative integers $s$ and $t$, which is crucial for the
rest of the paper, e.g. to determine equalities in Corollary \ref
{2--2}(i) and Corollary \ref {2--4}(i).

\begin{lem} \label {2--5} Let $X$ be an arbitrary $R$--module and let $s, t$ be non-negative
integers such that
              \begin{itemize}
                      \item[(a)] $\emph\H^{s+t-i}_{\fa}(\emph\H^{i}_{\fb}(X))= 0$ for all $i\in\{ 0, \cdots, s+t\}\setminus\{t\}$,
                      \item[(b)] $\emph\H^{s+t-i+1}_{\fa}(\emph\H^{i}_{\fb}(X))= 0$ for all $i\in\{0, \cdots, t-1\}$, and
                      \item[(c)] $\emph\H^{s+t-i-1}_{\fa}(\emph\H^{i}_{\fb}(X))= 0$ for all $i\in\{t+1, \cdots, s+t\}$.
              \end{itemize}
Then we have the isomorphism
$\emph\H^{s}_{\fa}(\emph\H^{t}_{\fb}(X))\cong \emph\H^{s+t}_{\fa + \fb}(X).$

\begin{proof} Consider the Grothendieck spectral sequence
$$E^{p, q}_{2}:= \H^{p}_{\fa}(\H^{q}_{\fb}(X))
_{\stackrel{\Longrightarrow}{p}} \H^{p+q}_{\fa+\fb}(X).$$ For all
$r\geq 2$, let $Z_{r}^{s, t}= \ker(E_{r}^{s, t}\longrightarrow
E_{r}^{s+r, t+1-r})$ and $B_{r}^{s, t}= \im(E_{r}^{s-r,
t+r-1}\longrightarrow E_{r}^{s, t})$. We have exact sequences
$$0\longrightarrow B_{r}^{s, t}\longrightarrow Z_{r}^{s, t}\longrightarrow
E_{r+1}^{s, t}\longrightarrow 0$$ and
$$0\longrightarrow Z_{r}^{s, t}\longrightarrow E_{r}^{s, t}\longrightarrow
E_{r}^{s, t}/Z_{r}^{s, t}\longrightarrow 0.$$ Since, by assumptions
(b) and (c), $E_{2}^{s+r, t+1-r}= 0= E_{2}^{s-r, t+r-1}$,
$E_{r}^{s+r, t+1-r}= 0= E_{r}^{s-r, t+r-1}$. Therefore $E_{r}^{s, t}/Z_{r}^{s, t}= 0= B_{r}^{s, t}$ which shows that $ E^{s, t}_{r}=
E^{s, t}_{r+1}$ and so
$$\H^{s}_{\fa}(\H^{t}_{\fb}(X))= E^{s, t}_{2}= E^{s, t}_{3}= \cdots= E^{s, t}_{s+t+1}=
 E^{s, t}_{s+t+2}= E^{s,t}_{\infty}.$$
There is a finite filtration
$$0= \phi^{s+t+1}H^{s+t}\subseteq \phi^{s+t}H^{s+t}\subseteq \cdots\subseteq
 \phi^{1}H^{s+t}\subseteq \phi^{0}H^{s+t}= H^{s+t}_{\fa+\fb}(X)$$
such that $E^{s+t-r, r}_{\infty}=
\phi^{s+t-r}H^{s+t}/\phi^{s+t-r+1}H^{s+t}$ for all $r$, $0\leq r\leq
s+t.$

Note that for each $r$, $0\leq r\leq t- 1$ or $t+1\leq r\leq s+ t$,
$E^{s+t-r, r}_{\infty}= 0$ by assumption (a). Therefore we get
$$0= \phi^{s+t+1}H^{s+t}= \phi^{s+t}H^{s+t}= \cdots= \phi^{s+2}H^{s+t}= \phi^{s+1}H^{s+t}$$
and
$$\phi^{s}H^{s+t}= \phi^{s-1}H^{s+t}= \cdots= \phi^{1}H^{s+t}= \phi^{0}H^{s+t}= \H^{s+t}_{\fa+\fb}(X).$$
Hence $\H^{s}_{\fa}(\H^{t}_{\fb}(X))= E^{s, t}_{\infty}=
\phi^{s}H^{s+t}/\phi^{s+1}H^{s+t}= \H^{s+t}_{\fa+\fb}(X)$ as
desired.
\end{proof}
\end{lem}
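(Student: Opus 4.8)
The plan is to run the Grothendieck spectral sequence
$$E^{p,q}_{2} := \H^{p}_{\fa}(\H^{q}_{\fb}(X)) \Longrightarrow \H^{p+q}_{\fa+\fb}(X),$$
which is available because $\Gamma_{\fa+\fb}=\Gamma_{\fa}\circ\Gamma_{\fb}$ and $\Gamma_{\fb}$ carries injective $R$--modules to injective (hence $\Gamma_{\fa}$--acyclic) ones. Since the target of the desired isomorphism is $\H^{s+t}_{\fa+\fb}(X)$, the strategy splits into two parts: first show that the spot $(s,t)$ already stabilizes at the $E_{2}$ page, and then show that it is the only spot on the antidiagonal $p+q=s+t$ that contributes to $\H^{s+t}_{\fa+\fb}(X)$.

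For the first part I would argue that no nonzero differential ever enters or leaves $(s,t)$. A differential out of $E^{s,t}_{r}$ lands in $E^{s+r,\,t-r+1}_{r}$, a subquotient of $E^{s+r,\,t-r+1}_{2}=\H^{s+r}_{\fa}(\H^{t-r+1}_{\fb}(X))$; writing $i=t-r+1$, this is the term $\H^{s+t-i+1}_{\fa}(\H^{i}_{\fb}(X))$ appearing in hypothesis (b) whenever $0\le i\le t-1$, i.e. $2\le r\le t+1$, and it is trivially zero when $i<0$. Dually, a differential into $E^{s,t}_{r}$ comes from $E^{s-r,\,t+r-1}_{r}$, a subquotient of $\H^{s-r}_{\fa}(\H^{t+r-1}_{\fb}(X))$; with $i=t+r-1\ge t+1$ this is the term in hypothesis (c), and it is trivially zero when $s-r<0$. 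Feeding this into the standard exact sequences
$$0\longrightarrow B^{s,t}_{r}\longrightarrow Z^{s,t}_{r}\longrightarrow E^{s,t}_{r+1}\longrightarrow 0, \qquad 0\longrightarrow Z^{s,t}_{r}\longrightarrow E^{s,t}_{r}\longrightarrow E^{s,t}_{r}/Z^{s,t}_{r}\longrightarrow 0,$$
one gets $B^{s,t}_{r}=0$ and $E^{s,t}_{r}/Z^{s,t}_{r}=0$ for every $r\ge 2$, hence $E^{s,t}_{r+1}=E^{s,t}_{r}$ for all such $r$ and therefore $\H^{s}_{\fa}(\H^{t}_{\fb}(X))=E^{s,t}_{2}=E^{s,t}_{\infty}$.

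For the second part I would invoke hypothesis (a): for $p+q=s+t$ with $p\ne s$, setting $i=q$ (so $i\ne t$) the term $E^{p,q}_{2}=\H^{s+t-i}_{\fa}(\H^{i}_{\fb}(X))$ vanishes, and hence so does its subquotient $E^{p,q}_{\infty}$. Taking the finite filtration $0=\phi^{s+t+1}H\subseteq\phi^{s+t}H\subseteq\cdots\subseteq\phi^{0}H=\H^{s+t}_{\fa+\fb}(X)$ with $\phi^{p}H/\phi^{p+1}H\cong E^{p,\,s+t-p}_{\infty}$, all successive quotients are zero except the one at $p=s$; telescoping from the top gives $\phi^{s+1}H=0$, and from the bottom $\phi^{s}H=\phi^{0}H=\H^{s+t}_{\fa+\fb}(X)$, so $\H^{s+t}_{\fa+\fb}(X)\cong\phi^{s}H/\phi^{s+1}H\cong E^{s,t}_{\infty}\cong\H^{s}_{\fa}(\H^{t}_{\fb}(X))$, which is exactly the claim.

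I do not anticipate a genuine obstacle here beyond careful index bookkeeping. The one point that needs attention is that hypotheses (b) and (c) must cover \emph{precisely} the pages $2\le r\le t+1$ and $2\le r\le s+1$ on which a differential into or out of $(s,t)$ could be nonzero; this works out because on all later pages the relevant bidegree has a negative coordinate, so the corresponding local cohomology term vanishes for free and no extra hypothesis is needed.
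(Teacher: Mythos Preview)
Your proof is correct and follows essentially the same route as the paper's: both run the Grothendieck spectral sequence $E^{p,q}_{2}=\H^{p}_{\fa}(\H^{q}_{\fb}(X))\Rightarrow\H^{p+q}_{\fa+\fb}(X)$, use hypotheses (b) and (c) to kill the differentials into and out of the spot $(s,t)$ so that $E^{s,t}_{2}=E^{s,t}_{\infty}$, and then use hypothesis (a) to collapse the filtration on $\H^{s+t}_{\fa+\fb}(X)$ to the single graded piece $E^{s,t}_{\infty}$. Your index bookkeeping is in fact slightly more explicit than the paper's, in that you spell out exactly which pages ($2\le r\le t+1$ and $2\le r\le s+1$) are handled by (b) and (c) and why the remaining pages are covered automatically by negativity of an index.
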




Now, we are able to discuss conditions under which the inequalities
Corollary \ref{2--2}(i) and Corollary \ref{2--4}(i) become equalities.

\begin{cor} \label {2--6} Suppose that $M$ is a finite
$R$--module such that $(\fa+ \fb) M\neq M$. Then the following statements hold true.
               \begin{enumerate}
                   \item[\emph{(i)}]{$\emph\H^{\emph{\cd}(\fa,M)+ \emph{\cd}(\fb,M)}_{\fa+ \fb}(M)
                   \cong \emph\H^{\emph{\cd}(\fa,M)}_{\fa}(\emph\H^{\emph{\cd}(\fb,M)}_{\fb}(M))$.}
                   \item[\emph{(ii)}]{The following statements are
                   equivalent.
                       \begin{enumerate}
                            \item[\emph{(a)}]{$\emph{\cd}(\fa+ \fb,M)= \emph{\cd}(\fa,M)+ \emph{\cd}(\fb,M)$.}
                            \item[\emph{(b)}]{$\emph{\cd}(\fa,M)= \emph{\cd}(\fa,\emph\H^{\emph{\cd}(\fb,M)}_{\fb}(M))$.}
                            \item[\emph{(c)}]{$\emph{\cd}(\fb,M)= \emph{\cd}(\fb,\emph\H^{\emph{\cd}(\fa,M)}_{\fa}(M))$.}
                       \end{enumerate}}
               \end{enumerate}
\end{cor}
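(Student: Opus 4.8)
The plan is to deduce everything from Lemma 2.5 by carefully checking, in the two situations at hand, that its hypotheses (a), (b), (c) hold with the specific choice $s = \cd(\fa,M)$ and $t = \cd(\fb,M)$. Write $a = \cd(\fa,M)$ and $b = \cd(\fb,M)$; note that $a$ and $b$ are finite because $M$ is finite and $(\fa+\fb)M \ne M$ forces neither $\fa$ nor $\fb$ to be trivial on $M$ in a way that matters, so in particular $a+b = \cd(\fa,M)+\cd(\fb,M)$ is the target cohomological degree. For part (i), I would verify (a), (b), (c) of Lemma 2.5 directly: for each $i$ with $0 \le i \le a+b$ and $i \ne b$, the module $\H^{i}_{\fb}(M)$ either vanishes (when $i > b$) or, when $i < b$, has support contained in $\Supp_R(M)$, so by \cite[Theorem 1.4]{DY} we have $\cd(\fa,\H^{i}_{\fb}(M)) \le \cd(\fa,M) = a$; since $a+b-i > a$ in that range, $\H^{a+b-i}_{\fa}(\H^{i}_{\fb}(M)) = 0$, giving (a). Conditions (b) and (c) are checked the same way: the degree $a+b-i+1$ (for $i \le b-1$) and $a+b-i-1$ (for $i \ge b+1$) both exceed $a$ under the stated index constraints, so the relevant $\fa$-local cohomology of $\H^{i}_{\fb}(M)$ vanishes by the same support argument. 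Then Lemma 2.5 gives $\H^{a}_{\fa}(\H^{b}_{\fb}(M)) \cong \H^{a+b}_{\fa+\fb}(M)$, which is exactly (i).

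For part (ii), the implication (a) $\Rightarrow$ (b): if $\cd(\fa+\fb,M) = a+b$, then $\H^{a+b}_{\fa+\fb}(M) \ne 0$, so by (i) the module $\H^{a}_{\fa}(\H^{b}_{\fb}(M)) \ne 0$, whence $\cd(\fa,\H^{b}_{\fb}(M)) \ge a$; the reverse inequality $\cd(\fa,\H^{b}_{\fb}(M)) \le a$ always holds by the support argument above, giving equality. Conversely, (b) $\Rightarrow$ (a): part (i) already shows $\H^{a+b}_{\fa+\fb}(M) \cong \H^{a}_{\fa}(\H^{b}_{\fb}(M))$; I would argue that (b) implies this module is nonzero, hence $\cd(\fa+\fb,M) \ge a+b$, and the reverse inequality is Corollary 2.2(i). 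The subtle point here is that $\cd(\fa,N) = a$ for $N = \H^{b}_{\fb}(M)$ only says $\H^{a}_{\fa}(N) \ne 0$ directly from the definition of $\cd$, so this is immediate. By symmetry (swapping the roles of $\fa$ and $\fb$, and using the analogue of (i) with $s = b$, $t = a$), the equivalence (a) $\Leftrightarrow$ (c) follows identically.

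The main obstacle I anticipate is purely bookkeeping: making sure the index ranges in Lemma 2.5(a), (b), (c) are matched correctly against the vanishing ranges $i > b$ (where $\H^{i}_{\fb}(M) = 0$ outright) and $i < b$ (where the degree shift pushes the $\fa$-cohomology above $\cd(\fa,M)$), with the boundary case $i = b$ excluded exactly as Lemma 2.5 requires. One must also be slightly careful that $\H^{b}_{\fb}(M)$ itself is nonzero and that $\cd(\fa, \H^b_\fb(M))$ is well-defined, i.e. that this module is not the zero module — but $b = \cd(\fb,M)$ guarantees $\H^{b}_{\fb}(M) \ne 0$ by definition. No genuinely new idea beyond Lemma 2.5 and \cite[Theorem 1.4]{DY} is needed; the whole corollary is an application of the spectral sequence collapse already packaged in Lemma 2.5.
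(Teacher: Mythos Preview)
Your approach is essentially identical to the paper's: apply Lemma~\ref{2--5} with $s=\cd(\fa,M)$ and $t=\cd(\fb,M)$ for part (i), then combine (i) with \cite[Theorem 1.4]{DY} and Corollary~\ref{2--2}(i) for part (ii). One small slip in your bookkeeping: in verifying condition (c) of Lemma~\ref{2--5} you assert that the degree $a+b-i-1$ exceeds $a$ for $i\ge b+1$, but in fact $a+b-i-1\le a-2$ in that range; the correct (and immediate) justification is the one you already used for condition (a), namely that $\H^{i}_{\fb}(M)=0$ outright once $i>b=\cd(\fb,M)$.
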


\begin{proof}
(i) Apply Lemma \ref {2--5} with $s= \cd(\fa,M)$ and $t= \cd(\fb,M)$.

(ii) The implications $(a \Rightarrow b)$ and $(a \Rightarrow c)$ are clear
from part (i) and \cite [Theorem 1.4]{DY}. For implications  $(b \Rightarrow a)$
and $(c \Rightarrow a)$, one may use part (i) and
Corollary \ref{2--2}(i).
\end{proof}



With a similar argument, one has the following result for an
arbitrary module.

\begin{cor} \label {2--7} Suppose that $X$ is an arbitrary
$R$--module. Then we have
               \begin{enumerate}
                   \item[\emph{(i)}]{$\emph\H^{\emph{\ara}(\fa)+ \emph\cd(\fb,X)}_{\fa+ \fb}(X)\cong \emph\H^{\emph{\ara}(\fa)}_{\fa}(\emph\H^{\emph{\cd}(\fb,X)}_{\fb}(X))$.}
                   \item[\emph{(ii)}]{The following statements are equivalent.
                       \begin{enumerate}
                            \item[\emph{(a)}]{$\emph{\cd}(\fa+ \fb,X)= \emph{\ara}(\fa)+ \emph\cd(\fb,X)$.}
                            \item[\emph{(b)}]{$\emph{\ara}(\fa)= \emph{\cd}(\fa,\emph\H^{\emph{\cd}(\fb,X)}_{\fb}(X))$.}
                       \end{enumerate}}
               \end{enumerate}
\end{cor}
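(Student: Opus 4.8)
The plan is to imitate the proof of Corollary \ref{2--6} almost verbatim, with the integer $\ara(\fa)$ taking over the role played there by $\cd(\fa,M)$, and with the support--monotonicity input \cite[Theorem 1.4]{DY} replaced throughout by the unconditional inequality $\cd(\fa,Y)\leq\ara(\fa)$, valid for \emph{every} non--zero $R$--module $Y$. This inequality is the special case of Corollary \ref{2--4}(iii) obtained by taking the zero ideal for the smaller one (note $\cd(0,Y)=0$); equivalently it follows at once from the \v{C}ech complex on $\ara(\fa)$ elements generating an ideal with radical $\sqrt{\fa}$.

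For part (i) I would apply Lemma \ref{2--5} with $s=\ara(\fa)$ and $t=\cd(\fb,X)$, so that its conclusion becomes exactly the asserted isomorphism; it then only remains to verify the three vanishing hypotheses (a), (b), (c) of that lemma. Write $e=\ara(\fa)$ and $f=\cd(\fb,X)$. Whenever $i>f$ one has $\H^i_\fb(X)=0$, and this kills every term of (c) together with every term of (a) indexed by $i\in\{f+1,\dots,e+f\}$. Whenever $i<f$ one has $e+f-i\geq e+1$ and $e+f-i+1\geq e+2$, so both $\H^{e+f-i}_\fa(\H^i_\fb(X))$ and $\H^{e+f-i+1}_\fa(\H^i_\fb(X))$ vanish by the bound $\cd(\fa,-)\leq e$, and this kills every term of (b) together with every term of (a) indexed by $i\in\{0,\dots,f-1\}$. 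So Lemma \ref{2--5} applies and gives $\H^{e}_\fa(\H^{f}_\fb(X))\cong\H^{e+f}_{\fa+\fb}(X)$.

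For part (ii), keep $e=\ara(\fa)$ and $f=\cd(\fb,X)$. If (a) holds then $\H^{e+f}_{\fa+\fb}(X)\neq 0$, so by (i) the module $\H^{e}_\fa(\H^{f}_\fb(X))$ is non--zero, that is $\cd(\fa,\H^{f}_\fb(X))\geq e$; since $\cd(\fa,\H^{f}_\fb(X))\leq\ara(\fa)=e$ is automatic, (b) follows. Conversely, if (b) holds then $\H^{e}_\fa(\H^{f}_\fb(X))\neq 0$, hence $\H^{e+f}_{\fa+\fb}(X)\neq 0$ by (i), so that $\cd(\fa+\fb,X)\geq e+f$; combining this with the reverse inequality of Corollary \ref{2--4}(i) yields (a).

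I do not expect a real obstacle: all the substantive content is already encapsulated in Lemma \ref{2--5}, and the only non--formal ingredient, the degree bound $\cd(\fa,-)\leq\ara(\fa)$, is classical. The one thing to be careful about is the index bookkeeping in verifying hypotheses (a)--(c); and it is worth remarking that, in contrast with Corollary \ref{2--6}, here no finiteness or non--torsion hypothesis on $X$ is needed, precisely because the replacement bound is unconditional, so that in the degenerate cases (for instance $\fa+\fb=R$) the two sides of the isomorphism in (i) simply vanish together.
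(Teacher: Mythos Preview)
Your proposal is correct and follows exactly the route the paper intends: the paper gives no separate proof for Corollary~\ref{2--7}, merely noting that ``with a similar argument'' to Corollary~\ref{2--6} one obtains the result, and your argument is precisely that similar argument---Lemma~\ref{2--5} with $s=\ara(\fa)$, $t=\cd(\fb,X)$ for part~(i), and then part~(i) together with Corollary~\ref{2--4}(i) and the universal bound $\cd(\fa,-)\leq\ara(\fa)$ in place of \cite[Theorem~1.4]{DY} for part~(ii). Your explicit verification of hypotheses (a)--(c) and the index bookkeeping are accurate.
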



\section{artinian local cohomology modules}




In this section, we study artinian property of local cohomology
modules. For this purpose, for ideals $\fb\supseteq\fa$, we
introduce the notion of cohomological dimension of an $R$--module
$X$ with respect to $\fb/\fa$.

\begin{defn} \label {3--1} Let
$\fb\supseteq\fa$ be ideals of $R$ and let $X$ be an $R$--module.
Define the  cohomological dimension of $X$ with respect to $\fb/\fa$
as $$\emph\cd(\fb/\fa, X):= \inf \{\emph{\cd}(\fc, X): \fc \ \emph{is
an ideal of}\  R \ \emph{and}\  \sqrt{\fb}= \sqrt{\fc+ \fa}\}.$$
\end{defn}

It is easy to see that $\cd(\fb/\fa, X)\leq \ara(\fb/\fa)$ and, for
a finite $R$--module $M$,
$$\cd(\fb/\fa, M)\geq\cd(\fb, M)-\cd(\fa, M)$$
by Corollary \ref {2--2}(i). Note that when $\fa X= 0$,
we have $\cd(\fb/\fa, X)= \cd(\fb, X)= \cd_{R/\fa}(\fb/\fa, X)$. One may
notice that if
$\Supp_R(X)\subseteq\Supp_R(M)$, then $\cd(\fb/\fa, X)\leq
\cd(\fb/\fa, M)$.

Now, we can state the following theorem.



\begin{thm} \label {3--2}  Let $\fb\supseteq\fa$ be ideals of $R$, let $X$ be an arbitrary
$R$--module and let $n$ be a non-negative integer.
\begin{enumerate}
      \item[\emph{(i)}]{If $\emph{\H}^{i}_{\fa}(X)$ is $\fb$--torsion for all $i$, $0\leq i\leq n$,
       then $\emph{\H}^{i}_{\fa}(X)\cong \emph{\H}^{i}_{\fb}(X)$ for all $i$, $0\leq i\leq n$.}
      \item[\emph{(ii)}]{If $\emph{\H}^{i}_{\fa}(X)$ is $\fb$--torsion for all $i\geq n$,
      then $\emph{\H}^{i}_{\fa}(X)\cong \H^{i}_{\fb}(X)$ for all $i\geq n+ \emph\ara(\fb/\fa)$.}
      \item[\emph{(iii)}]{Assume that $M$ is a finite $R$--module and that $\emph{\H}^{i}_{\fa}(M)$ is $\fb$--torsion for all $i\geq
      n$. Then $\emph{\H}^{i}_{\fa}(M)\cong \H^{i}_{\fb}(M)$ for all $i\geq n+
\emph\cd(\fb/\fa, M)$.}
     \end{enumerate}

\begin{proof}
Let  $u= \ara(\fb/\fa)$ and $v= \cd(\fb/\fa, M)$. There exist
$x_{1},...,x_{u}\in R$ and an ideal $\fc$ of $R$ such that $\cd(\fc,
M)= v$ and $\sqrt{(x_{1},...,x_{u})+ \fa}= \sqrt{\fb}= \sqrt{\fc+
\fa}$. In computing local cohomology modules, we may assume that
$(x_{1},...,x_{u})+ \fa= \fb= \fc+ \fa$. Now, for all $i$,  $0\leq
i\leq n$, (respectively, $i\geq n+ u$, $i\geq n+ v$,) apply Lemma \ref
{2--5} with $s= 0$ and $t= i$ to obtain the isomorphisms
$\Gamma_{(x_{1},...,x_{u})}(\H^i_\fa(X))\cong \H^i_\fb(X)$ for all
$i$, $0\leq i\leq n$, (respectively,
$\Gamma_{(x_{1},...,x_{u})}(\H^i_\fa(X))\cong \H^i_\fb(X)$ for all
$i\geq n+ u$, $\Gamma_{\fc}(\H^i_\fa(M))\cong \H^i_\fb(M)$ for all
$i\geq n+ v$,). Therefore all of the assertions follow.
\end{proof}
\end{thm}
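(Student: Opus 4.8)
The plan is to reduce all three parts to a uniform application of Lemma~\ref{2--5} with the outer ideal being $\fa$ and the inner ideal being a complement of $\fa$ inside $\fb$. Concretely, I would first choose, using the definitions of $\ara(\fb/\fa)$ and $\cd(\fb/\fa,M)$, elements $x_1,\dots,x_u\in R$ with $u=\ara(\fb/\fa)$ and an ideal $\fc$ with $\cd(\fc,M)=v:=\cd(\fb/\fa,M)$, so that $\sqrt{(x_1,\dots,x_u)+\fa}=\sqrt{\fb}=\sqrt{\fc+\fa}$. Since local cohomology depends only on the radical of the ideal, for the purpose of computing the relevant $\H^i$ I may replace these ideals by equal ones and assume $(x_1,\dots,x_u)+\fa=\fb=\fc+\fa$ on the nose. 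Write $\fd$ for whichever of $(x_1,\dots,x_u)$ or $\fc$ is being used in the given part; the key point is $\fd+\fa=\fb$.

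The heart of the argument is to verify the hypotheses (a), (b), (c) of Lemma~\ref{2--5} for the pair of ideals $(\fd,\fa)$ (playing the roles of $\fa,\fb$ there, respectively) with $s=0$ and $t=i$, for each $i$ in the appropriate range. Hypothesis (a) with $s=0$, $t=i$ asks that $\H^{i-j}_{\fd}(\H^{j}_{\fa}(X))=0$ for all $j\in\{0,\dots,i\}\setminus\{i\}$, i.e. for $j<i$; since $\H^{j}_{\fa}(X)$ is $\fb$-torsion (hence $\fd$-torsion, because $\sqrt{\fd}\supseteq\sqrt{\fb}$ implies $\fd$-torsion $\supseteq$ $\fb$-torsion... more precisely $\fb$-torsion modules are $\fd$-torsion since $\fd^N\subseteq\fb$ for some $N$), its higher $\fd$-local cohomology vanishes, so $\H^{i-j}_{\fd}(\H^{j}_{\fa}(X))=0$ whenever $i-j>0$. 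Hypothesis (c) is vacuous because with $s=0$ the index set $\{t+1,\dots,s+t\}=\{i+1,\dots,i\}$ is empty. Hypothesis (b) asks $\H^{i-j+1}_{\fd}(\H^{j}_{\fa}(X))=0$ for $j\in\{0,\dots,i-1\}$; again each $\H^{j}_{\fa}(X)$ is $\fb$-torsion and $i-j+1\geq 2>0$, so this vanishes. In part (i) one needs this for $0\le i\le n$, which is exactly the hypothesis that $\H^i_\fa(X)$ is $\fb$-torsion for $i\le n$; in part (ii) one needs it for $i\ge n+u$, but the ascending indices $j$ in (b) run only up to $i-1$, so one has to be slightly more careful and observe that the needed vanishing only involves $\H^j_\fa(X)$ for $j\ge n$ once $i\ge n+u$ forces $j\ge n$ — actually $j$ ranges over $\{0,\dots,i-1\}$, so this is where the role of $u=\ara(\fd)$ enters, through hypothesis (c) being trivial but (b) needing the torsion only in the range where it is assumed. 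The conclusion of Lemma~\ref{2--5} then reads $\H^0_{\fd}(\H^i_\fa(X))\cong\H^i_{\fd+\fa}(X)=\H^i_\fb(X)$, i.e. $\Gamma_{\fd}(\H^i_\fa(X))\cong\H^i_\fb(X)$. Finally, since $\H^i_\fa(X)$ is $\fb$-torsion and hence $\fd$-torsion, $\Gamma_{\fd}(\H^i_\fa(X))=\H^i_\fa(X)$, giving $\H^i_\fa(X)\cong\H^i_\fb(X)$ in the claimed range; for part (iii), the same with $\fd=\fc$ and using $\cd(\fc,M)=v$ to control the upper end so that the relevant spectral-sequence entries vanish.

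The main obstacle I anticipate is the bookkeeping in parts (ii) and (iii): one must check that the specific vanishing hypotheses (a)--(c) of Lemma~\ref{2--5}, for each $i$ in the shifted range $i\ge n+u$ (resp. $i\ge n+v$), only ever invoke $\fb$-torsion-ness of modules $\H^j_\fa(X)$ with $j\ge n$, since that is all that is assumed. With $s=0$ the entries involved are $\H^{i-j}_{\fd}(\H^j_\fa(X))$, $\H^{i-j+1}_{\fd}(\H^j_\fa(X))$ for $j<i$, and the troublesome ones are those with small $i-j$, i.e. $j$ close to $i$, hence $j$ large — so in the range $i\ge n+u$ the ``dangerous'' indices $j$ automatically satisfy $j\ge n$ provided one uses that $\H^{k}_{\fd}=0$ for $k>u=\ara(\fd)$ to kill the contributions from small $j$. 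Making this shift precise — and in particular seeing that $\ara(\fb/\fa)$ (resp. $\cd(\fb/\fa,M)$) is exactly the amount by which the range must be raised — is the one genuinely delicate point; the rest is a mechanical transcription of Lemma~\ref{2--5}. I would therefore organize the write-up by first disposing of the trivial hypothesis (c), then handling (a), then (b), in each case noting explicitly for which $i$ the assumed torsion suffices, and only at the end assembling the three cases.
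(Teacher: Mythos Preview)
Your proposal is correct and follows exactly the paper's route: choose $\fd=(x_1,\dots,x_u)$ or $\fc$ with $\fd+\fa=\fb$ up to radical, apply Lemma~\ref{2--5} with $s=0$, $t=i$, and read off $\Gamma_\fd(\H^i_\fa(X))\cong\H^i_\fb(X)$; your identification of the one delicate point---that for $j<n$ the vanishing comes from $i-j>\ara(\fd)$ (resp.\ $i-j>\cd(\fc,M)$, via $\Supp_R(\H^j_\fa(M))\subseteq\Supp_R(M)$), not from torsion---is exactly right and is what the paper leaves implicit. One small slip to fix in the write-up: your parenthetical ``$\sqrt{\fd}\supseteq\sqrt{\fb}$'' is backwards; the correct (and trivial) reason $\fb$-torsion implies $\fd$-torsion is simply that $\fd\subseteq\fb$, so $\fd^n\subseteq\fb^n$.
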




\begin{cor} \label {3--3} Let $R$ be a local ring with maximal
ideal $\fm$, let $M$ be a finite $R$--module and let $n$ be a
non-negative integer. If $\emph\dim_{R} (\H^{i}_{\fa}(M))\leq 0$ for
all $i$, $0\leq i\leq n$ \emph{(}respectively, for all $i\geq
n$\emph{)}, then $\H^{i}_{\fa}(M)$ is artinian for all $i$, $0\leq
i\leq n$ \emph{(}respectively, for all $i\geq
n+ \emph\cd(\fm/\fa, M)$\emph{)}.

\begin{proof} Since $\H^{i}_{\fm}(M)$ is artinian for all $i$, the assertion
follows from Theorem \ref {3--2}.
\end{proof}
\end{cor}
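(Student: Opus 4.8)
The plan is to reduce the artinian-ness statement to the isomorphisms supplied by Theorem \ref{3--2}, using the well-known fact that over a local ring $(R,\fm)$ the local cohomology modules $\H^i_\fm(M)$ of a finite module are artinian for every $i$. The first observation I would record is that an $R$-module $N$ with $\dim_R(N)\leq 0$ is automatically $\fm$-torsion: every element of $N$ generates a submodule of dimension $\leq 0$, hence supported only at $\fm$, so it is killed by a power of $\fm$. Applying this with $N=\H^i_\fa(M)$ shows that the hypothesis ``$\dim_R(\H^i_\fa(M))\leq 0$'' is exactly the hypothesis ``$\H^i_\fa(M)$ is $\fm$-torsion'' needed to invoke Theorem \ref{3--2} with $\fb=\fm$.

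For the first case, suppose $\dim_R(\H^i_\fa(M))\leq 0$ for all $i$ with $0\leq i\leq n$. Then each such $\H^i_\fa(M)$ is $\fm$-torsion, so Theorem \ref{3--2}(i) (with $\fb=\fm$) gives $\H^i_\fa(M)\cong\H^i_\fm(M)$ for all $i$, $0\leq i\leq n$. Since $M$ is finite and $R$ is local, $\H^i_\fm(M)$ is artinian, and therefore so is $\H^i_\fa(M)$, for every such $i$. For the second case, suppose $\dim_R(\H^i_\fa(M))\leq 0$ for all $i\geq n$; then each of these modules is $\fm$-torsion, so Theorem \ref{3--2}(iii) (again with $\fb=\fm$) yields $\H^i_\fa(M)\cong\H^i_\fm(M)$ for all $i\geq n+\cd(\fm/\fa,M)$, and the artinian-ness of $\H^i_\fm(M)$ once more transfers to $\H^i_\fa(M)$ for all $i$ in that range. (Note that $\fa\subseteq\fm$ always holds, since $\fa$ is a proper ideal and $R$ is local, so the hypothesis $\fb\supseteq\fa$ of Theorem \ref{3--2} is satisfied.)

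There is essentially no obstacle here; the corollary is a direct specialization of Theorem \ref{3--2} to $\fb=\fm$, the only two inputs being the elementary ``finite-dimension-zero implies $\fm$-torsion'' remark and Grothendieck's theorem that $\H^i_\fm(M)$ is artinian for finite $M$ over a local ring. The one point that deserves a word of care is that in the second case the conclusion is only claimed for $i\geq n+\cd(\fm/\fa,M)$ rather than for all $i\geq n$; this is forced by the index shift in Theorem \ref{3--2}(iii), and one should not expect to do better without further hypotheses.
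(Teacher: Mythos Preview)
Your proof is correct and follows exactly the paper's approach: the paper's proof simply says ``Since $\H^{i}_{\fm}(M)$ is artinian for all $i$, the assertion follows from Theorem \ref{3--2},'' and you have spelled out precisely the implicit steps (dimension $\leq 0$ implies $\fm$-torsion, then apply Theorem \ref{3--2}(i) and (iii) with $\fb=\fm$).
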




Recall that a finite $R$--module $M$ over a local ring $(R, \fm)$ is
called a {\it generalized Cohen--Macaulay} module if $\H_\fm^i(M)$
is of finite length for all $i< \dim_R(M)$. The following result
gives us a characterization for a finite module $M$ over a local
ring to be generalized Cohen-Macaulay in terms of the existence of
an ideal $\fa$ for which $\H^{i}_{\fa}(M)$ is of finite length for
all $i< \dim_{R}(M)$.

\begin{cor} \label {3--4} Let $R$ be a local ring with maximal
ideal $\fm$ and let $M$ be a finite $R$--module. Then the following
statements are equivalent.
                       \begin{enumerate}
                            \item[\emph{(i)}]{$M$ is generalized Cohen-Macaulay module.}
                            \item[\emph{(ii)}]{There exists an ideal $\fa$ such that $\emph\H^{i}_{\fa}(M)$
                                        is of finite length for all $i$, $0 \leq i < \emph{\dim}_{R}(M)$.}
                       \end{enumerate}

\begin{proof}  (i) $\Rightarrow$ (ii). It is trivial.

(ii) $\Rightarrow$ (i). This follows from Theorem \ref {3--2}(i).
\end{proof}
\end{cor}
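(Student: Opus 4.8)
The plan is to establish the substantive implication $(ii)\Rightarrow(i)$; the reverse implication is immediate, taking $\fa=\fm$ and appealing to the very definition of a generalized Cohen--Macaulay module. Put $d=\dim_R(M)$, which is finite since $M$ is finite over the noetherian local ring $(R,\fm)$. If $d=0$ the set of indices $i$ with $0\leq i<d$ is empty and both statements hold vacuously, so I would assume $d\geq 1$ and fix an ideal $\fa$ satisfying $(ii)$. Because $R$ is local, $\fa\subseteq\fm$, so Theorem \ref{3--2} is available with $\fb=\fm$.

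The only extra ingredient is the elementary remark that a finite length module $N$ over $(R,\fm)$ is $\fm$--torsion: a composition series gives $\fm^{\ell}N=0$ where $\ell=\ell_R(N)$, whence $N=\Gamma_\fm(N)$. Applying this to the hypothesis $(ii)$, the module $\H^{i}_{\fa}(M)$ is $\fm$--torsion for every $i$ with $0\leq i\leq d-1$. Now I would invoke Theorem \ref{3--2}(i) with $\fb=\fm$ and $n=d-1$, which yields isomorphisms $\H^{i}_{\fa}(M)\cong\H^{i}_{\fm}(M)$ for all $i$ with $0\leq i\leq d-1$. Since each $\H^{i}_{\fa}(M)$ has finite length by $(ii)$, so does each $\H^{i}_{\fm}(M)$ for $i<d$, which is exactly the assertion that $M$ is generalized Cohen--Macaulay.

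I do not expect any genuine obstacle: essentially all the weight is carried by Theorem \ref{3--2}(i) (ultimately by the Grothendieck spectral sequence through Lemma \ref{2--5}), and the passage from \emph{finite length} to \emph{$\fm$--torsion} is routine. The one point to be careful about is that one should not try to route the proof through Corollary \ref{3--3}: that corollary would only deliver that $\H^{i}_{\fm}(M)$ is artinian, and an artinian $\fm$--torsion module need not have finite length (for instance $\E(R/\fm)$). What actually makes $(ii)\Rightarrow(i)$ go through is that Theorem \ref{3--2}(i) provides an honest isomorphism $\H^{i}_{\fa}(M)\cong\H^{i}_{\fm}(M)$, along which the finite--length property is transported verbatim.
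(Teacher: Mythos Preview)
Your proposal is correct and follows exactly the paper's approach: the paper's proof of (ii) $\Rightarrow$ (i) is the single line ``This follows from Theorem \ref{3--2}(i),'' and you have simply unpacked that reference, observing that finite length over a local ring implies $\fm$--torsion so that Theorem \ref{3--2}(i) applies with $\fb=\fm$ and $n=d-1$, and then transporting finite length along the resulting isomorphism. Your cautionary remark about Corollary \ref{3--3} is also well taken.
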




A non-zero $R$--module $X$ is called {\it secondary} if its
multiplication map by any element $a$ of $R$ is either surjective or
nilpotent. A prime ideal $\fp$ of $R$ is said to be an {\it attached
prime} of $X$ if $\fp =(T :_{R} X)$ for some submodule $T$ of $X$.
If $X$ admits a reduced secondary representation, $X=X_1+X_2+\dots
+X_n$, then the set of attached primes $\Att_R(X)$ of $X$ is equal
to $\{ \sqrt{0:_R X_i}: i= 1,\cdots, n\}$ (cf. \cite{Mac}).

Assume that $M$ is a finite $R$--module of finite dimension $d$ and
that $\fa$ is an ideal of $R$. It is well-known that $\H_\fa^d(M)$
is artinian. If $(R, \fm)$ is local, then the first author and
Yassemi in \cite [Theorem A] {DY1} (see also \cite [Theorem 8.2.1]
{He}) showed that $\Att_R(\H_\fa^d(M))= \{\fp\in\Assh_R(M):
\H_\fa^d(R/\fp)\not= 0\}$ which generalized the well-known result
$\Att_R(\H_\fm^d(M))= \Assh_R(M) (= \{\fp\in\Supp_R(M): \dim(R/\fp)=
d\})$ (see \cite[Theorem 2.2]{MS}). In the following remark, those
ideals $\fa$ for which $\Att_R(\H_\fa^d(M))= \Assh_R(M)$ are
characterized. Denote the height support, $\hSupp_R(M)$, of $M$ as
the set of all $\fp\in\Supp_R(M)$ such that $\fp\in\V(\fq)$ for some
$\fq\in\Assh_R(M)$.

\begin{rem} \label {3--5} \emph{Let $(R, \fm)$ be a complete local ring and let $M$ be a non-zero finite $R$--module with Krull
dimension $d$. Then the following statements are equivalent.
             \begin{itemize}
                     \item[(i)]   $\H^{d}_{\fa}(M)\cong \H^{d}_{\fm}(M)$.
                     \item[(ii)]  $\Att_R(\H_\fa^d(M))= \Assh_R(M)$.
                     \item[(iii)] $\V(\fa)\cap \hSupp_R(M)= \{\fm\}$.
             \end{itemize}}
\end{rem}

The proof of (i) $\Rightarrow$ (ii) is clear. To prove (ii)
$\Rightarrow$ (iii), one may use  Lichtenbaum-Hartshorne Vanishing
Theorem. For (iii) $\Rightarrow$ (i), choose a submodule $N$ of $M$
such that $\Ass_R(N)= \Ass_R(M)\setminus \Assh_R(M)$ and
$\Ass_R(M/N)= \Assh_R(M)$ to obtain $\H_\fa^d(M)\cong \H_\fa^d(M/N)$
and $\H_\fm^d(M)\cong \H_\fm^d(M/N)$. Therefore
$\Supp_R(\H_\fa^i(M/N))\subseteq \{\fm\}$ for all $i$. Applying
Theorem \ref {3--2}(i) gives the claim. This remark shows that if
$M$ is equidimensional, then $\Att_R(\H_\fa^d(M))\not= \Assh_R(M)$
for each ideal $\fa$ with $\h_M(\fa)< \dim_R(M)$.\\



Recall that, an $R$--module $X$ is said to be {\it minimax} if it
has a finite submodule $X'$ such that $X/X'$ is artinian (See \cite
{Z}). Note that the class of minimax modules includes all finite and
all artinian modules. We close this section by showing that if $\fm$
is a maximal ideal containing $\fa$, then $\H_\fm^i(M)$ is artinian
for all $i\leq n$ (respectively, for all $i\geq n+\cd(\fm/\fa, M)$)
whenever $\H_\fa^i(M)$ is minimax for all $i\leq n$ (respectively,
for all $i\geq n$). We first bring an analogous lemma as Lemma \ref
{2--1}.

\begin{lem} \label {3--6} Let $X$ be an $R$--module and let $t$ be a non-negative
integer such that $\emph\H^{t-r}_{\fa}(\emph\H^{r}_{\fb}(X))$ is artinian for all $r$, $0\leq r\leq t$.
Then $\emph\H^{t}_{\fa + \fb}(X)$ is artinian.

\begin{proof} By the Grothendieck spectral sequence
$$E^{p, q}_{2}:=\H^{p}_{\fa}(\H^{q}_{\fb}(X)) _{\stackrel{\Longrightarrow}{p}} \H^{p +q}_{\fa + \fb}(X),$$
the proof is similar to that of Lemma \ref {2--1}.
\end{proof}
\end{lem}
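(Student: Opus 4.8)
The plan is to mimic the proof of Lemma \ref{2--1}, replacing ``vanishing'' throughout by ``artinian'' and invoking the fact that the class of artinian $R$-modules is a Serre subcategory (closed under submodules, quotients and extensions). First I would recall the Grothendieck spectral sequence $E^{p,q}_2 = \H^p_\fa(\H^q_\fb(X)) \Rightarrow \H^{p+q}_{\fa+\fb}(X)$ from \cite[Theorem 11.38]{Rot}, exactly as in Lemma \ref{2--1}. The hypothesis tells us that every entry on the antidiagonal $p+q=t$ of the $E_2$-page, namely $E_2^{t-r,r} = \H^{t-r}_\fa(\H^r_\fb(X))$ for $0\le r\le t$, is artinian.

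Next I would observe that each $E_\infty^{t-r,r}$ is a subquotient of $E_2^{t-r,r}$: indeed $E_{i+1}^{t-r,r}$ is a subquotient of $E_i^{t-r,r}$ for every $i\ge 2$, and the pages stabilize after finitely many steps (as in Lemma \ref{2--1}, the differentials into and out of the spot $(t-r,r)$ eventually land in zero-index regions, so $E_\infty^{t-r,r}=E_{t+2}^{t-r,r}$). Since a subquotient of an artinian module is artinian, each $E_\infty^{t-r,r}$ is artinian. Now $\H^t_{\fa+\fb}(X)$ carries a finite filtration
$$0= \phi^{t+1}H^{t}\subseteq \phi^{t}H^{t}\subseteq \cdots\subseteq \phi^{1}H^{t}\subseteq \phi^{0}H^{t}= \H^{t}_{\fa+\fb}(X)$$
whose successive quotients $\phi^{t-r}H^t/\phi^{t-r+1}H^t \cong E_\infty^{t-r,r}$ are artinian; since artinian modules are closed under extensions, a downward induction on $r$ (from $\phi^{t+1}H^t=0$ up to $\phi^0 H^t$) shows each $\phi^{t-r}H^t$ is artinian, and in particular $\H^t_{\fa+\fb}(X)=\phi^0H^t$ is artinian.

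There is essentially no obstacle here: the only substantive point beyond the bookkeeping of Lemma \ref{2--1} is the stability under subquotients and extensions of the class of artinian modules, which is standard. One should just take a little care that the filtration is \emph{finite} (it has length $t+1$) so that the inductive extension argument terminates, and that the spectral sequence entries outside the first quadrant vanish so that $E_\infty^{t-r,r}$ really is a subquotient of $E_2^{t-r,r}$ rather than requiring infinitely many pages — both facts are already used in Lemma \ref{2--1} and carry over verbatim. Hence the proof is ``similar to that of Lemma \ref{2--1}'' as the statement asserts.
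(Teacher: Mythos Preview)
Your proposal is correct and is exactly what the paper intends by ``the proof is similar to that of Lemma \ref{2--1}'': you run the same Grothendieck spectral sequence argument, replacing vanishing by artinianness and using that artinian modules form a Serre subcategory. There is nothing to add.
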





\begin{thm} \label {3--7}  Let $\fm$ be a maximal ideal of $R$ contains $\fa$, let $X$ be an arbitrary
$R$--module and let $n$ be a non-negative integer. Then
    \begin{itemize}
        \item[(i)]{If $\emph{\H}^{i}_{\fa}(X)$ is minimax for all $i$, $0\leq i\leq n$,
                    then $\emph{\H}^{i}_{\fm}(X)$ is artinian for all $i$, $0\leq i\leq n$.}
        \item[(ii)]{If $\emph{\H}^{i}_{\fa}(X)$ is minimax for all $i\geq n$,
                    then $\emph{\H}^{i}_{\fm}(X)$ is artinian for all $i\geq n+ \emph\ara(\fm/\fa)$.}
        \item[(iii)]{Assume that $M$ is a finite $R$--module and that $\emph{\H}^{i}_{\fa}(M)$ is minimax for all $i\geq n$.
                    Then $\emph{\H}^{i}_{\fm}(M)$ is artinian for all $i\geq n+\emph\cd(\fm/\fa, M)$.}
     \end{itemize}

\begin{proof}
By considering lemma \ref {3--6}, this is similar to that of Theorem
\ref {3--2}.
\end{proof}
\end{thm}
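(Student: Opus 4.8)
The plan is to imitate the proof of Theorem \ref{3--2}, but with its use of Lemma \ref{2--5} (which turned a torsion hypothesis into an isomorphism) replaced by Lemma \ref{3--6} (which turns artinianness along an antidiagonal into artinianness of the abutment). In each part I fix an ideal $\fc$ of $R$ with $\sqrt{\fc+\fa}=\fm$: in (i) and (ii) I take $\fc=(x_{1},\dots,x_{u})$ with $u=\ara(\fm/\fa)$, so that $\H^{p}_{\fc}(-)=0$ for $p>u$; in (iii) I take $\fc$ with $\cd(\fc,M)=\cd(\fm/\fa,M)=:v$, so that $\H^{p}_{\fc}(\H^{q}_{\fa}(M))=0$ for $p>v$ (indeed $\Supp_{R}(\H^{q}_{\fa}(M))\subseteq\Supp_{R}(M)$ gives $\cd(\fc,\H^{q}_{\fa}(M))\leq\cd(\fc,M)=v$ by \cite[Theorem 1.4]{DY}). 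Since local cohomology depends only on the radical of an ideal, $\H^{t}_{\fc+\fa}(X)\cong\H^{t}_{\fm}(X)$ for all $t$, and the tool is the Grothendieck spectral sequence $E^{p,q}_{2}=\H^{p}_{\fc}(\H^{q}_{\fa}(X))\Rightarrow\H^{p+q}_{\fm}(X)$ together with Lemma \ref{3--6} applied to the ideals $\fc$ and $\fa$.

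The crux is the claim that, if $N$ is a minimax $R$--module which is $\fa$--torsion, then $\H^{p}_{\fc}(N)$ is artinian for all $p\geq0$. First, since $N$ is $\fa$--torsion, $\H^{p}_{\fc}(N)\cong\H^{p}_{\fc+\fa}(N)$: writing $N=\varinjlim_{k}(0:_{N}\fa^{k})$ and noting that the images of $\fc$ and $\fc+\fa$ in $R/\fa^{k}$ have the same radical (so $\H^{p}_{\fc}$ and $\H^{p}_{\fc+\fa}$ agree on the $R/\fa^{k}$--module $(0:_{N}\fa^{k})$), one passes to the direct limit. As $\sqrt{\fc+\fa}=\fm$, we get $\H^{p}_{\fc}(N)\cong\H^{p}_{\fm}(N)$. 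Now pick a finite submodule $N'$ of $N$ with $N/N'$ artinian. Then $\H^{p}_{\fm}(N/N')$ is artinian ($0$ for $p>0$, a submodule of $N/N'$ for $p=0$), and $\H^{p}_{\fm}(N')$ is $\fm$--torsion, hence $\H^{p}_{\fm}(N')\cong\H^{p}_{\fm R_{\fm}}(N'_{\fm})$, which is artinian over the noetherian local ring $R_{\fm}$ and so over $R$. The long exact sequence of $\H^{\bullet}_{\fm}$ attached to $0\to N'\to N\to N/N'\to0$ then shows $\H^{p}_{\fm}(N)$, and therefore $\H^{p}_{\fc}(N)$, is artinian.

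Granting the claim, fix $i$ in the stated range and apply Lemma \ref{3--6} with $t=i$ to the ideals $\fc$ and $\fa$; it suffices that $\H^{i-r}_{\fc}(\H^{r}_{\fa}(X))$ be artinian for $0\leq r\leq i$. In (i), $i\leq n$ forces $r\leq n$, so $\H^{r}_{\fa}(X)$ is minimax (and $\fa$--torsion) and the claim applies. In (ii), $i\geq n+u$: if $r\leq n-1$ then $i-r\geq u+1>u$, so $\H^{i-r}_{\fc}(\H^{r}_{\fa}(X))=0$, while if $r\geq n$ the claim applies to $\H^{r}_{\fa}(X)$. Part (iii) is the same with $u$ replaced by $v$, the vanishing $\H^{i-r}_{\fc}(\H^{r}_{\fa}(M))=0$ for $i-r>v$ replacing that of (ii). In every case Lemma \ref{3--6} gives that $\H^{i}_{\fc+\fa}(X)\cong\H^{i}_{\fm}(X)$ (resp.\ $\H^{i}_{\fm}(M)$) is artinian, which is the assertion.

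The main obstacle is the claim. Its two genuinely substantial points are: (1) the reduction $\H^{p}_{\fc}(N)\cong\H^{p}_{\fm}(N)$ for $\fa$--torsion $N$, which is what makes the concrete choice of $\fc$ (and hence the exponent shift $\ara(\fm/\fa)$ or $\cd(\fm/\fa,M)$) irrelevant to the artinianness itself, and rests on $\fc+\fa$ being $\fm$--primary up to radical; and (2) the passage to $R_{\fm}$ for the finite part $N'$, which is where one invokes the classical fact that the local cohomology of a finite module at a maximal ideal is artinian. The only other input is the standard vanishing $\H^{p}_{\fc}(A)=0$ for $p>0$ and $A$ artinian.
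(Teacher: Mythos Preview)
Your proof is correct and follows exactly the route the paper intends: adapt the template of Theorem~\ref{3--2} by replacing the isomorphism Lemma~\ref{2--5} with the artinianness Lemma~\ref{3--6}, the bridge being your explicit claim that $\H^{p}_{\fc}(N)$ is artinian for every minimax $\fa$--torsion module $N$. The paper's one-line proof leaves precisely this claim to the reader; you have supplied it (and correctly), so there is nothing to add.
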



\section{Non-artinian local cohomology modules}




In this section, we study those local cohomology modules which are
not artinian. The following two results give us many non-artinian
local cohomology modules.

\begin{cor} \label {4--1} Let $X$ be an
$R$--module, let $n$ be a positive integer and let
$x_{1},...,x_{n}\in R$ such that $\emph{\cd}(\fa+
(x_{1},...,x_{n}),X)= \emph{\cd}(\fa,X)+ n$. Then
$\emph{\dim}_R(\emph\H^{\emph{\cd}(\fa,X)}_{\fa}(X))\geq n$. In
particular, $\emph\H^{\emph{\cd}(\fa,X)}_{\fa}(X)$ is not artinian.

\begin{proof} By Corollary \ref {2--4}(i),
$\ara(x_{1},...,x_{n})= n$. By Corollary \ref {2--7}(ii) and
Grothendieck Vanishing Theorem, we have
$\dim_{R}(\H^{{\tiny\cd(\fa,X)}}_{\fa}(X))\geq n$ and so
$\H^{{\tiny\cd(\fa,X)}}_{\fa}(X)$ is not artinian.
\end{proof}
\end{cor}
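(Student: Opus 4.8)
The plan is to extract everything from the machinery already in place, chiefly Corollary \ref{2--7} and Corollary \ref{2--4}(i), together with two standard vanishing theorems: Grothendieck's Vanishing Theorem (which says $\H^j_{\fb}(Y)=0$ for all $j>\dim_R(Y)$) and the elementary fact that an artinian module over a local ring has dimension $\leq 0$. First I would set $e:=\cd(\fa,X)$ and $\fb:=(x_1,\dots,x_n)$. Since $\fb$ is generated by $n$ elements, $\ara(\fb)\leq n$; on the other hand, Corollary \ref{2--4}(i) applied with the roles of $\fa$ and $\fb$ interchanged gives $\cd(\fa+\fb,X)\leq \ara(\fb)+\cd(\fa,X)=\ara(\fb)+e$. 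Combining this with the hypothesis $\cd(\fa+\fb,X)=e+n$ forces $\ara(\fb)\geq n$, hence $\ara(\fb)=n$ exactly. This is the small bookkeeping step that lets us feed the hypothesis into Corollary \ref{2--7}.

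Next I would invoke Corollary \ref{2--7}(i) with the ideal "$\fa$" of that corollary taken to be $\fb$ and the ideal "$\fb$" taken to be $\fa$: it yields an isomorphism
\[
\H^{\,\ara(\fb)+\cd(\fa,X)}_{\fb+\fa}(X)\;\cong\;\H^{\,\ara(\fb)}_{\fb}\bigl(\H^{\cd(\fa,X)}_{\fa}(X)\bigr),
\]
that is, $\H^{\,e+n}_{\fa+\fb}(X)\cong \H^{n}_{\fb}\bigl(\H^{e}_{\fa}(X)\bigr)$ using $\ara(\fb)=n$. By the hypothesis the left-hand side is $\cd(\fa+\fb,X)=e+n$, hence nonzero, so $\H^{n}_{\fb}\bigl(\H^{e}_{\fa}(X)\bigr)\neq 0$. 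Now apply Grothendieck's Vanishing Theorem to the module $Y:=\H^{e}_{\fa}(X)$: if $\dim_R(Y)<n$ then $\H^{n}_{\fb}(Y)=0$, a contradiction. Therefore $\dim_R\bigl(\H^{\cd(\fa,X)}_{\fa}(X)\bigr)=\dim_R(Y)\geq n$.

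Finally, for the "in particular" clause: $n$ is a positive integer, so $\dim_R\bigl(\H^{\cd(\fa,X)}_{\fa}(X)\bigr)\geq n\geq 1>0$, and an artinian module over $R$ has dimension $\leq 0$ (its support consists of maximal ideals only); hence $\H^{\cd(\fa,X)}_{\fa}(X)$ cannot be artinian. The only point that needs a moment's care is making sure the numerical identities line up so that Corollary \ref{2--7}(i) applies cleanly — in particular that $(\fa+\fb)X\neq X$ is automatic here, since $\cd(\fa+\fb,X)=e+n>0$ already forces $\H^{e+n}_{\fa+\fb}(X)\neq 0$, so $X$ is not $(\fa+\fb)$-torsion-trivial in the relevant sense; beyond that the argument is a direct assembly of the quoted results, so I do not anticipate a genuine obstacle.
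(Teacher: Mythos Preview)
Your proof is correct and follows essentially the same route as the paper: establish $\ara(x_1,\dots,x_n)=n$ via Corollary~\ref{2--4}(i), then use Corollary~\ref{2--7} (you invoke part~(i) directly, the paper cites the equivalent part~(ii)) together with Grothendieck's Vanishing Theorem to conclude. Your aside about needing $(\fa+\fb)X\neq X$ is unnecessary, since Corollary~\ref{2--7} carries no such hypothesis (that condition appears only in Corollary~\ref{2--6} for finite modules).
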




\begin{cor} \label {4--2} \emph{(cf.} \cite [Proposition 3.2]{BN}\emph{)}
Let $(R, \fm)$ be a local ring and let $M$ be a finite $R$--module
with Krull dimension $d$. Assume also that $\fa$ is generated by a
subset of system of parameters $x_{1}, ..., x_{n}$ of $M$ of length
$n$. Then $\emph{\dim}_{R}(\emph\H^{\emph{\cd}(\fa,M)}_{\fa}(M))=
d-n$. In particular, if $n< d$, then
$\emph\H^{\emph{\cd}(\fa,M)}_{\fa}(M)$ is not artinian.

\begin{proof} There exist $x_{n+1}, ..., x_{d}\in R$ such that
$x_{1}, ..., x_{d}$ is a  system of parameters of $M$. Set $\fb=
(x_{n+1}, ...,x_{d})$. As $\fm= \sqrt{\fa+ \fb+ \Ann_{R}(M)}$, we
can, and do, assume that $\fa+ \fb= \fm$. By Corollary \ref
{2--2}(i), $\cd(\fa,M)= n$ and $\cd(\fb,M)= d-n$. Now, by using
Corollary \ref {2--6}(ii), we obtain
$\dim_{R}(\H^{n}_{\fa}(M))\geqslant d-n$. On the other hand, we have
$\dim_{R}(\H^{n}_{\fa}(M))\leqslant d-n$ since
$\Supp_{R}(\H^{n}_{\fa}(M))\subseteq \Supp_{R}(M/\fa M).$ Thus
$\dim_{R}(\H^{n}_{\fa}(M))= d-n$ as desired.
\end{proof}
\end{cor}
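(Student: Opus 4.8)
The plan is to complete $x_{1}, \ldots, x_{n}$ to a full system of parameters $x_{1}, \ldots, x_{d}$ of $M$, to put $\fb = (x_{n+1}, \ldots, x_{d})$, and then to run the equality case of the Section~2 spectral-sequence estimates on the pair $(\fa, \fb)$. First I would observe that $(\fa + \fb)M \subseteq \fm M \subsetneq M$, so Corollary~\ref{2--6} is applicable to $(\fa, \fb)$, and that $\sqrt{\fa + \fb + \Ann_{R}(M)} = \fm$ forces $\H^{i}_{\fa + \fb}(M) \cong \H^{i}_{\fm}(M)$ for every $i$; in particular $\cd(\fa + \fb, M) = \cd(\fm, M) = d$ by Grothendieck's non-vanishing theorem. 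Since $\fa$ and $\fb$ are generated by $n$ and $d - n$ elements respectively, the bounds $\cd(\fa, M) \le \ara(\fa) \le n$ and $\cd(\fb, M) \le \ara(\fb) \le d - n$ hold (each a special case of Corollary~\ref{2--4}(iii) with $\fa$ replaced by the zero ideal), so Corollary~\ref{2--2}(i) squeezes $d = \cd(\fa + \fb, M) \le \cd(\fa, M) + \cd(\fb, M) \le d$. Hence $\cd(\fa, M) = n$ and $\cd(\fb, M) = d - n$.

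Next I would feed this into Corollary~\ref{2--6}(ii): the equality $\cd(\fa + \fb, M) = \cd(\fa, M) + \cd(\fb, M)$ just obtained is condition (a), which is equivalent to condition (c), i.e. $\cd(\fb, \H^{\cd(\fa, M)}_{\fa}(M)) = \cd(\fb, M) = d - n$. Because $\cd(\fb, N) \le \dim_{R}(N)$ for every $R$--module $N$ (Grothendieck vanishing), taking $N = \H^{n}_{\fa}(M)$ yields $\dim_{R}(\H^{\cd(\fa, M)}_{\fa}(M)) = \dim_{R}(\H^{n}_{\fa}(M)) \ge d - n$. For the reverse inequality, $\Supp_{R}(\H^{n}_{\fa}(M)) \subseteq \Supp_{R}(M/\fa M)$ gives $\dim_{R}(\H^{n}_{\fa}(M)) \le \dim_{R}(M/\fa M) = d - n$, the last equality holding because $x_{1}, \ldots, x_{n}$ is part of a system of parameters of $M$. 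Thus $\dim_{R}(\H^{\cd(\fa, M)}_{\fa}(M)) = d - n$, and when $n < d$ this is positive, so $\H^{\cd(\fa, M)}_{\fa}(M)$ cannot be artinian, since an artinian $R$--module has Krull dimension at most $0$.

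The substantive content is entirely packaged in Corollary~\ref{2--6}, so there is no genuinely hard step; the points that need care are the two classical facts about top local cohomology with respect to $\fm$ --- Grothendieck non-vanishing, used to certify $\cd(\fa + \fb, M) = d$, and Grothendieck vanishing, used to turn the cohomological-dimension identity for $\H^{n}_{\fa}(M)$ into a Krull-dimension lower bound --- together with the standard fact that killing part of a system of parameters drops Krull dimension by exactly the number of elements used. The one non-automatic choice is to notice that it is condition (c) of Corollary~\ref{2--6}(ii), rather than condition (b), that produces the lower bound for $\dim_{R}(\H^{\cd(\fa, M)}_{\fa}(M))$; everything else is routine bookkeeping with arithmetic ranks and supports.
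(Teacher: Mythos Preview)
Your proposal is correct and follows essentially the same route as the paper's own proof: extend to a full system of parameters, set $\fb=(x_{n+1},\dots,x_d)$, use the squeeze from Corollary~\ref{2--2}(i) together with $\cd(\fa+\fb,M)=d$ to pin down $\cd(\fa,M)=n$ and $\cd(\fb,M)=d-n$, then invoke Corollary~\ref{2--6}(ii) (condition (c)) plus Grothendieck vanishing for the lower bound and the support inclusion $\Supp_R(\H^n_\fa(M))\subseteq\Supp_R(M/\fa M)$ for the upper bound. Your write-up is in fact more explicit than the paper's at the two points it leaves terse---why $\cd(\fa,M)=n$, and exactly how Corollary~\ref{2--6}(ii) yields the dimension inequality---so nothing needs to change.
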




Now it is natural to raise the following question.

\begin{ques} \label {4--3} Assume that $M$ is a finite $R$--module and that
$\emph{\H}_\fa^{\emph\cd(\fa, M)}(M)$ is not artinian. Is there an element
$x$ in $R$ such that $$\emph\cd(\fa+Rx, M)=\emph\cd(\fa, M)+ 1?$$
\end{ques}

It is clear that the above question has a positive answer if $R$ is
local and $\fa$ is generated by a subset of system of parameters of
$M$ of length smaller than $\dim_R(M)$.\\



In the rest of the paper, we study the set of ideals $\fb$ of $R$
such that $\H_\fb^i(M)$ is not artinian for some non-negative
integer $i$.

\begin{defn} \label {4--4}  Let $M$ be a finite $R$--module and let $r$ be a non-negative
integer. Define the set of ideals
\begin{center}
$\mathcal{L}^r(M):= \{\fb: \H_\fb^i(M)$ is not artinian for some
$i\geq r$\}.
\end{center}
\end{defn}

Note that $\mathcal{L}^r(M)$ is the empty set for all $r\geq
\dim_R(M)$. If $0\leq r< \dim_R(M)$,  $\mathcal{L}^r(M)$ is
non-empty by Corollary \ref{4--2}. In the following remark, it is
shown that the set $\mathcal{L}^r(M)$ is independent of the module
structure.



\begin{rem} \label {4--5} Assume that $L$, $M$ and $N$ are
finite $R$--modules and that $r$ is a non--negative integer.
Then the following statements are true.
         \begin{itemize}
         \item[(i)] If $\emph\Supp_R(N)\subseteq \emph\Supp_R(M)$, then $\mathcal{L}^r(N)\subseteq \mathcal{L}^r(M)$.
                \item[(ii)] If $\ 0\longrightarrow L\longrightarrow M\longrightarrow N\longrightarrow 0$
                 is an exact sequence,
                            then $\mathcal{L}^r(M)= \mathcal{L}^r(L)\cup \mathcal{L}^r(N)$.
                \item[(iii)] $\mathcal{L}^r(M)= \displaystyle\bigcup_{\fp\in \emph\Ass_R(M)}\mathcal{L}^r(R/\fp)$.
         \end{itemize}
\end{rem}

\begin{proof} (i) Assume that $\fa$ is an ideal of $R$ which is not
in $\mathcal{L}^r(M)$; so that $\H^{i}_{\fa}(M)$ is artinian for all
$i\geq r$. Therefore $\H^{i}_{\fa}(N)$ is artinian for all $i\geq r$
by \cite [Theorem 3.1]{AM}, that is $\fa$ does not belong to
$\mathcal{L}^r(N)$. Thus $\mathcal{L}^r(N)\subseteq
\mathcal{L}^r(M)$ as desired.

(ii) By (i), $\mathcal{L}^r(M)\supseteq \mathcal{L}^r(L)\cup
\mathcal{L}^r(N)$. Assume that $\fa\in \mathcal{L}^r(M)$. There
exists an integer $i$, $i\geq r$, such that $\H_\fa^i(M)$ is not
artinian. Now, by the exact sequence $\H_\fa^i(L)\longrightarrow
\H_\fa^i(M)\longrightarrow \H_\fa^i(N)$, the other inclusion
follows.

(iii) By (i), we have the inclusion $\mathcal{L}^r(M)\supseteq
\displaystyle\cup_{\fp\in {\small\Ass}_R(M)}\mathcal{L}^r(R/\fp)$. Assume,
conversely, that $\fb\not\in \displaystyle\cup_{\fp\in
{\small\Ass}_R(M)}\mathcal{L}^r(R/\fp)$. There is a prime filtration
$0= M_0\subset  M_1\subset \cdots \subset M_s= M$ of $M$ such that,
for all $j\in \{1, \cdots, s\}$, $M_j/M_{j-1}\cong R/\fp_j$ for some
$\fp_j\in \Supp_R(M)$. For each $j\in \{1, \cdots, s\}$, there is
$\fq_{j}\in \Ass_R(M)$ contained in $\fp_j$ and thus, by assumption
and part (i), $\fb\not\in \mathcal{L}^r(R/\fp_j)$. Now, by applying
$\H_\fb^i(-)$ on each exact sequence
$$0\longrightarrow M_j\longrightarrow M_{j+1}\longrightarrow
M_{j+1}/M_j\longrightarrow 0,$$ it follows that $\fb\not\in
\mathcal{L}^r(M)$.
\end{proof}



Before bringing the main theorem of this section, recall the
following result which is straightforward from the fact that, for an
$R$--module $X$ and for each $\alpha\in R$, the kernel (respectively,
the cokernel) of the natural map $X\longrightarrow X_\alpha$ is
$\H_{R\alpha}^0(X)$ (respectively, $\H_{R\alpha}^1(X)$), where
$X_\alpha$ denote the localization of $X$ at set $\{1, \alpha,
\alpha^2, \alpha^3,\cdots\}$.

\begin{prop} \label {4--6} For any $R$--module $X$ and for any $\alpha\in R$, there are exact
sequences
$$0\longrightarrow \emph\H_{R\alpha}^1(\emph\H_\fa^{i-1}(X))\longrightarrow
\emph\H_{\fa+R\alpha}^{i}(X)\longrightarrow
\emph\H_{R\alpha}^0(\emph\H_\fa^i(X))\longrightarrow 0,$$
for all $i\geq 0$.
\end{prop}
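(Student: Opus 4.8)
The plan is to derive both short exact sequences at once from the Grothendieck spectral sequence attached to the pair $\fa$, $R\alpha$, using only that $R\alpha$ is generated by a single element. First I would record the identity $\Gamma_{\fa+R\alpha}=\Gamma_{R\alpha}\circ\Gamma_\fa$: an element of $X$ is killed by a power of $\fa+R\alpha$ precisely when it is killed both by a power of $\fa$ and by a power of $\alpha$. Since $\Gamma_\fa$ carries injective $R$--modules to injective (hence $\Gamma_{R\alpha}$--acyclic) modules, the composite--functor spectral sequence invoked in the proof of Lemma \ref{2--1} is available and reads
$$E^{p,q}_2:=\H^p_{R\alpha}(\H^q_\fa(X)) _{\stackrel{\Longrightarrow}{p}} \H^{p+q}_{\fa+R\alpha}(X).$$

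Because $\ara(R\alpha)\leq 1$ one has $\H^p_{R\alpha}(-)=0$ for all $p\geq 2$, so the second page is supported on the two columns $p=0$ and $p=1$. Hence for every $r\geq 2$ the differential $d_r\colon E^{p,q}_r\to E^{p+r,q-r+1}_r$ has its source in column $p$ and its target in column $p+r\geq 2$, one of which is therefore zero; thus $E_2=E_\infty$. The finite filtration $0=\phi^{i+1}H^i\subseteq\phi^iH^i\subseteq\cdots\subseteq\phi^0H^i=\H^i_{\fa+R\alpha}(X)$ supplied by the spectral sequence then collapses: $\phi^jH^i=\phi^{j+1}H^i$ for all $j\geq 2$, so $\phi^2H^i=0$, and the only two possibly non--zero graded pieces are $\phi^1H^i=E^{1,i-1}_\infty=\H^1_{R\alpha}(\H^{i-1}_\fa(X))$ and $H^i/\phi^1H^i=E^{0,i}_\infty=\H^0_{R\alpha}(\H^i_\fa(X))$. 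The resulting exact sequence $0\to\phi^1H^i\to H^i\to H^i/\phi^1H^i\to 0$ is exactly the assertion, for every $i\geq 0$ (when $i=0$ the first term vanishes and the sequence reduces to the identification $\Gamma_{\fa+R\alpha}(X)=\Gamma_{R\alpha}(\Gamma_\fa(X))$).

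An equivalent route, closer in spirit to the remark preceding the statement, goes through the \v{C}ech complex on $\alpha$: if $C^\bullet$ is a complex computing $\ugamma_\fa(X)$, then the two--term complex $C^\bullet\to C^\bullet_\alpha$ computes $\H^\bullet_{\fa+R\alpha}(X)$, and, since localization is exact, its long exact cohomology sequence takes the form
$$\cdots\longrightarrow\H^{i-1}_\fa(X)\stackrel{\lambda}{\longrightarrow}\H^{i-1}_\fa(X)_\alpha\longrightarrow\H^i_{\fa+R\alpha}(X)\longrightarrow\H^i_\fa(X)\stackrel{\lambda}{\longrightarrow}\H^i_\fa(X)_\alpha\longrightarrow\cdots$$
with $\lambda$ the natural localization map. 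Extracting the $i$--th segment and invoking the recalled fact that the kernel and the cokernel of $\lambda$ on $\H^i_\fa(X)$ are $\H^0_{R\alpha}(\H^i_\fa(X))$ and $\H^1_{R\alpha}(\H^i_\fa(X))$ yields the same short exact sequence. In either approach the only delicate points are bookkeeping ones --- checking the acyclicity hypothesis for the Grothendieck spectral sequence, or pinning down the degree shift and the sign of the connecting homomorphism in the \v{C}ech argument so that it is genuinely $\lambda$ --- and I anticipate no real obstacle; the whole content of the proposition is the degeneration forced by $\ara(R\alpha)\leq 1$.
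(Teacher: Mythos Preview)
Your proposal is correct. The paper itself does not prove the proposition but simply refers the reader to \cite[Proposition 8.1.2]{BS} and \cite[Theorem 2.5]{BFT}; your second argument (the \v{C}ech complex on $\alpha$ and the long exact sequence in which the maps $\lambda$ have kernel $\H^0_{R\alpha}(-)$ and cokernel $\H^1_{R\alpha}(-)$) is essentially the proof given in those sources, and is indeed foreshadowed by the remark the paper places just before the statement.

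Your first argument, via the two--column degeneration of the Grothendieck spectral sequence $E^{p,q}_2=\H^p_{R\alpha}(\H^q_\fa(X))\Rightarrow\H^{p+q}_{\fa+R\alpha}(X)$, is a genuinely different route. It has the advantage of being entirely internal to the paper's toolkit: it is the special case $s\in\{0,1\}$, $\fb=R\alpha$ of the machinery behind Lemmas \ref{2--1} and \ref{2--5}, and the vanishing hypotheses there are automatic because $\ara(R\alpha)\leq 1$. The \v{C}ech approach, on the other hand, is more elementary (no spectral sequence bookkeeping) and makes the maps in the short exact sequence completely explicit. Either argument is adequate; your verification that $\Gamma_\fa$ preserves injectives, so that the composite--functor spectral sequence applies, is the only point that needed checking and you handled it correctly.
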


\begin{proof}
See \cite[Proposition 8.1.2]{BS} (see also \cite[Theorem 2.5]{BFT}).
\end{proof}



The $i$th Bass number of $X$ with respect to the prime ideal $\fp$ of $R$, denoted by
$\mu^i(\fp, X)$, is defined to be the number of copies of the
indecomposable injective module $\E_R(R/\fp)$ in the direct sum
decomposition of the $i$th term of a minimal injective resolution of
$X$, which is equal to the rank of the vector space
$\Ext_{R_\fp}^i(k(\fp), X_\fp)$ over the field $k(\fp)= R_\fp/\fp
R_\fp$. When $(R, \fm)$ is local, we write $\mu^i(X):= \mu^i(\fm,
X)$ and refer it the $i$th Bass number of $X$.

In the following theorem, we study Bass numbers of certain
non--artinian local cohomology modules.

\begin{thm} \label {4--7} Assume that $(R, \fm)$ is a local ring and that $M$ is a finite $R$--module
with Krull dimension $d$. Let $r< d$ be a fixed non-negative
integer. Then for each maximal element $\fq$ of the non-empty set
$\mathcal{L}^r(M)$,
          \begin{itemize}
               \item[(i)] $\mu^j(\emph\H_\fq^i(M))< \infty$ for all $j\geq 0$ and all $i\geq r$.
               \item[(ii)] $\fq$ is a prime ideal.
          \end{itemize}

\begin{proof}
(i) As $\H_\fm^i(M)$ is artinian for all $i\geq 0$, we have
$\fq\not=\fm$. Choose an element $x\in\fm\setminus \fq$. Thus
$\H_{\fq+Rx}^i(M)$ is artinian for all $i\geq r$. Using the exact
sequence
$$0\longrightarrow \H_{Rx}^1(\H_\fq^{i-1}(M))\longrightarrow
\H_{\fq+Rx}^i(M)\longrightarrow
\H_{Rx}^0(\H_{\fq}^i(M))\longrightarrow 0,$$ it follows that, for
each $i\geq r$, the modules $\H_{Rx}^1(\H_\fq^{i}(M))$ and
$\H_{Rx}^0(\H_{\fq}^i(M))$ are artinian and so they have finite Bass
numbers. It follows by \cite[Theorem 2.1]{DY2} that
$\mu^j(\H_\fq^i(M))< \infty$ for all $j\geq 0$ and all $i\geq r$.

(ii) Assume that $x, y\in\fm\setminus \fq$ such that $xy\in\fq$. As
$\fq+ Rx$ and $\fq+ Ry$  properly contain $\fq$, it follows that the
modules $\H_{\fq+Rx}^i(M)$, $\H_{\fq+Ry}^i(M)$, and $
\H_{\fq+Rx+Ry}^i(M)$ are artinian for all $i\geq r$. Applying the
Mayer-Vietoris exact sequence
$$\H_{\fq+Rx}^i(M)\oplus \H_{\fq+Ry}^i(M)
\longrightarrow \H_{(\fq+Rx)\cap(\fq+Ry)}^i(M)\longrightarrow
\H_{\fq+Rx+Ry}^{i+1}(M),$$ we find that
$\H_{(\fq+Rx)\cap(\fq+Ry)}^i(M)$ is artinian for $i\geq r$. Note
that
$$\begin{array}{llll}
\sqrt{\fq}&\subseteq
\sqrt{(\fq+Rx)\cap(\fq+Ry)}\\&=\sqrt{(\fq+Rx)(\fq+Ry)}\\&=\sqrt{\fq^2+\fq
x+\fq y+Rxy}\\&\subseteq \sqrt{\fq}.
\end{array}$$
and hence $\H_{(\fq+Rx)\cap(\fq+Ry)}^i(M)\cong \H_\fq^i(M)$ is
artinian for $i\geq r$. This contradicts the fact that $\fq\in
\mathcal{L}^r(M)$, and so $\fq$ is a prime ideal.
\end{proof}
\end{thm}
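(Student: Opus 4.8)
\textbf{Proof proposal for Theorem \ref{4--7}.}
The plan is to exploit the single-element exact sequence of Proposition \ref{4--6} together with the maximality of $\fq$. Since $\H_\fm^i(M)$ is artinian for every $i$, the maximal ideal $\fm$ itself does not belong to $\mathcal{L}^r(M)$, hence $\fq\neq\fm$ and we may pick $x\in\fm\setminus\fq$. The ideal $\fq+Rx$ strictly contains $\fq$, so by maximality it is \emph{not} in $\mathcal{L}^r(M)$; that is, $\H_{\fq+Rx}^i(M)$ is artinian for all $i\geq r$. Feeding this into the exact sequence
$$0\longrightarrow \H_{Rx}^1(\H_\fq^{i-1}(M))\longrightarrow \H_{\fq+Rx}^i(M)\longrightarrow \H_{Rx}^0(\H_{\fq}^i(M))\longrightarrow 0$$
of Proposition \ref{4--6}, I would conclude (reading off the sub/quotient modules of an artinian module) that $\H_{Rx}^0(\H_\fq^i(M))$ and $\H_{Rx}^1(\H_\fq^i(M))$ are artinian, hence have finite Bass numbers, for every $i\geq r$. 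The final ingredient for part (i) is the observation that $\H_{Rx}^0$ and $\H_{Rx}^1$ of a module $Y$ control the Bass numbers of $Y$ itself — concretely, one invokes the result that if $\H_{Rx}^0(Y)$ and $\H_{Rx}^1(Y)$ have finite Bass numbers then so does $Y$ (this is the cited \cite[Theorem 2.1]{DY2}, applied with $Y=\H_\fq^i(M)$). That gives $\mu^j(\H_\fq^i(M))<\infty$ for all $j\geq 0$ and all $i\geq r$.

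For part (ii) the strategy is to show directly that $\fq$ is prime using the Mayer–Vietoris sequence already exploited in Corollary \ref{2--2}(ii), again leveraging maximality. Suppose $x,y\in\fm\setminus\fq$ with $xy\in\fq$. Each of $\fq+Rx$, $\fq+Ry$, and $\fq+Rx+Ry$ strictly contains $\fq$, so by maximality none lies in $\mathcal{L}^r(M)$, i.e.\ all of $\H_{\fq+Rx}^i(M)$, $\H_{\fq+Ry}^i(M)$, $\H_{\fq+Rx+Ry}^i(M)$ are artinian for $i\geq r$. The Mayer–Vietoris sequence
$$\H_{\fq+Rx}^i(M)\oplus\H_{\fq+Ry}^i(M)\longrightarrow \H_{(\fq+Rx)\cap(\fq+Ry)}^i(M)\longrightarrow \H_{\fq+Rx+Ry}^{i+1}(M)$$
then forces $\H_{(\fq+Rx)\cap(\fq+Ry)}^i(M)$ to be artinian for $i\geq r$. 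The crucial radical computation is
$$\sqrt{(\fq+Rx)\cap(\fq+Ry)}=\sqrt{(\fq+Rx)(\fq+Ry)}=\sqrt{\fq^2+\fq x+\fq y+Rxy}=\sqrt{\fq},$$
where the last equality uses $xy\in\fq$; since local cohomology depends only on the radical, this yields $\H_\fq^i(M)\cong\H_{(\fq+Rx)\cap(\fq+Ry)}^i(M)$, which is artinian for $i\geq r$ — contradicting $\fq\in\mathcal{L}^r(M)$. Hence no such $x,y$ exist and $\fq$ is prime.

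The main obstacle I anticipate is not either of these arguments per se — both run on maximality plus standard exact sequences — but rather pinning down exactly what \cite[Theorem 2.1]{DY2} provides and checking its hypotheses apply to $Y=\H_\fq^i(M)$, which is typically neither finite nor artinian. One has to be sure the cited finiteness-of-Bass-numbers criterion for the cokernel/kernel of $Y\to Y_x$ does not secretly require $Y$ to be, say, $\fa$-cofinite or to have some a priori finiteness property that $\H_\fq^i(M)$ need not enjoy. A secondary point to handle carefully is the induction/bookkeeping on $i$: the sequence relates $\H_\fq^{i-1}$ and $\H_\fq^i$, so one should make sure that running over all $i\geq r$ (not $i\geq r+1$) is legitimate, which it is because the conclusion about $\H_{Rx}^0(\H_\fq^i(M))$ at index $i$ and $\H_{Rx}^1(\H_\fq^i(M))$ at index $i$ (the latter coming from the sequence at level $i+1$) together cover every $i\geq r$.
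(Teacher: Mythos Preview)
Your proposal is correct and matches the paper's proof essentially line for line: part (i) via maximality, Proposition~\ref{4--6}, and \cite[Theorem 2.1]{DY2}; part (ii) via maximality, Mayer--Vietoris, and the radical computation $\sqrt{(\fq+Rx)\cap(\fq+Ry)}=\sqrt{\fq}$. Your extra remarks on the index bookkeeping and on the hypotheses of \cite[Theorem 2.1]{DY2} are sound and do not alter the argument.
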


There have been many attempts in the literature made to find some
conditions for the ideal $\fa$ to have finiteness for the Bass
numbers of the local cohomology modules supported at $\fa$. In
\cite[Corollary 2]{DM}, Delfino and Marley showed that the Bass
number $\mu^i(\fp, \H_\fa^j(M))$ is finite for all $\fp\in\Spec R$
and all $i, j$ whenever $M$ is a finite module over a ring $R$ and
$\fa$ is an ideal of $R$ with $\dim R/\fa= 1$.

Assume that $\fa$ and $\fb$ are two ideals of a local ring $(R,
\fm)$ with $\dim(R/\fa)= \dim(R/\fb)= 1$ such that $\V(\fa+ \fb)=
\{\fm\}$. Write the Mayer-Vietoris exact sequence
$$ \H^{j}_{\fm}(M)\longrightarrow  \H^{j}_{\fa}(M)\oplus
\H^{j}_{\fb}(M)\longrightarrow \H^{j}_{\fa\cap
\fb}(M)\longrightarrow \H^{j+1}_{\fm}(M).$$ As $\H^{i}_{\fm}(M)$ is
artinian for all $i$, we find that $\H^{j}_{\fa\cap \fb}(M)$ has
finite Bass numbers if and only if both $\H^{j}_{\fa}(M)$ and
$\H^{j}_{\fb}(M)$ have finite Bass numbers. Therefore
\cite[Corollary 2]{DM} is equivalent to the case where the ideal
$\fa$ is prime.\\

{\bf Comment.} Assume that $\fp$ is a prime ideal of $R$ such that
$\dim(R/\fp)= 1$ and $r$ is the smallest integer (if there is any)
such that $\H_\fp^i(M)$ is not artinian. Thus $\fp$ is a maximal
element of $\mathcal{L}^{r}(M)$. By Theorem \ref{4--7},
$\mu^j(\emph\H_\fp^i(M))< \infty$ for all $j\geq 0$ and all $i\geq
r$. As $\H_\fp^i(M)$ is artinian for all $i< r$, all $\H_\fp^i(M)$
have finite Bass numbers. Thus Theorem \ref{4--7} generalizes
\cite[Corollary 2]{DM}.\\

{\bf Acknowledgement.} The authors would like to thank the referee
for the invaluable comments on the manuscript.

\bibliographystyle{amsplain}

\end{document}